\newcommand{\conv}{\operatorname{conv}}
\newcommand{\Alg}{\operatorname{Alg}}
\newcommand{\Ker}{\operatorname{Ker}}
\newcommand{\Ran}{\operatorname{Ran}}
\newcommand{\restr}[2]{{% we make the whole thing an ordinary symbol
  \left.\kern-\nulldelimiterspace % automatically resize the bar with \right
  #1 % the function
  \vphantom{\big|} % pretend it's a little taller at normal size
  \right|_{#2} % this is the delimiter
  }}
\newtheorem{chapLemma}{Lemma}[chapter]
\newtheorem{secLemma}{Lemma}[section]
\newtheorem*{Lemma*}{Lemma}
\newtheorem{chapTheorem}[chapLemma]{Theorem}
\newtheorem*{theorem*}{Theorem}
\newtheorem{chapProp}[chapLemma]{Proposition}
\newtheorem{secProp}[secLemma]{Proposition}
\newtheorem*{Prop*}{Proposition}
\newtheorem{chapCor}[chapLemma]{Corollary}
\newtheorem*{Cor*}{Corollary}
\theoremstyle{definition}
\newtheorem{chapDef}[chapLemma]{Definition}
\newtheorem{secDef}[secLemma]{Definition}
\newtheorem*{Def*}{Definition}
\newtheorem*{Obs*}{Observation}
\newtheorem*{Fact*}{Fact}
\newtheorem{Ex}{Example}
\newtheorem{Qn}{Question}
\newtheorem{chapRemark}[chapLemma]{Remark}
\newtheorem{secRemark}[secLemma]{Remark}
\newtheorem*{Remark*}{Remark}
\newtheorem*{Notation*}{Notation}
\title{DILATIONS OF MATRICES}
\author{David Cohen\\	
		Advisor: Dr. Orr Moshe Shalit\\}
\date{June 2014}
\begin{document}
\maketitle

\chapter*{Abstract}
We explore aspects of dilation theory in the finite dimensional case 
and show that for a commuting $n$-tuple of operators $T=(T_1,...,T_n) $ acting 
on some finite dimensional Hilbert space $H$ and a compact set 
$X\subset \mathbb{C}^n$ the following are equivalent:
\begin{enumerate}
\item 
$T$ has a normal $ X$-dilation.
\item
For any $m\in \mathbb{N}$ there exists some 
finite dimensional Hilbert space $K$ containing $H$ and
 a tuple of commuting normal operators $N=(N_1,...,N_n)$ acting on $K$ 
such that $$  q(T)=P_Hq(N)|_H$$ 
for all polynomials $q$ of degree at most $m$
and such that the joint spectrum of $N$ is contained in $X$ (where $P_H$ is the projection
from $K$ to $H$).
\end{enumerate}
While $(2) \Rightarrow (1)$ is a simple consequence of Arveson's dilation theorem,
in order to show the other direction we prove    
that for a certain type of positive operator valued measure (POVM) and for a finite dimensional subspace $V$ of $C(X)$ there exists a cubature formula of the POVM with respect to the functions in $V$.
We then, by using Naimark's dilation theorem, obtain a spectral measure from which we are able to construct the wanted dilation. 
\tableofcontents
\chapter{Introduction}

We shall begin with a short survey on dilation theory.

\begin{chapDef}

Let $T$ be an operator acting on a Hilbert space $H$. 
A \textbf{dilation} of $T$
is an operator $A$ acting on a Hilbert space $K$ containing $H$ such that
$$ T^m=P_HA^m | _H \text{,}$$
for all $m\in \mathbb{N}$,
where $P_H$ denotes the projection of $K$ onto $H$.
\end{chapDef}

One of the most 
important results concerning dilations was due to Sz.-Nagy:
\begin{chapTheorem} [{\textbf{Sz.-Nagy dilation theorem} \cite{nagy1953contractions}  }]
Let $T$ be a
contraction acting on a Hilbert space $H$. Then there exists a Hilbert space
$K$ containing $H$ and a unitary operator $U$ acting on $K$ such that:
$$ T^m=P_HU^m | _H \ ,\ m\in \mathbb{N}.$$ 

\end{chapTheorem}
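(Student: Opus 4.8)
My approach would be to split the construction into two steps: first dilate the contraction $T$ to an \emph{isometry}, then dilate that isometry to a unitary, and finally observe that dilations compose. For the first step, note that since $\norm{T}\le 1$ we have $T^*T\le I$, so $I-T^*T\ge 0$ and (by the spectral theorem) it has a positive square root $D_T:=(I-T^*T)^{1/2}$, the \emph{defect operator}, with $D_T^2=I-T^*T$. On the Hilbert space $K_0:=H\oplus H\oplus H\oplus\cdots$ (countably many copies, $H$ identified with the first summand), define
\[
V(x_0,x_1,x_2,\dots):=(Tx_0,\;D_Tx_0,\;x_1,\;x_2,\dots).
\]
The identity $\norm{Tx_0}^2+\norm{D_Tx_0}^2=\langle(T^*T+D_T^2)x_0,x_0\rangle=\norm{x_0}^2$ shows immediately that $V$ is a well-defined bounded isometry. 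With respect to the splitting $K_0=H\oplus(H\oplus H\oplus\cdots)$, the operator $V$ is block lower triangular with $(1,1)$ entry $T$; hence $V^m$ is again block lower triangular with $(1,1)$ entry $T^m$, so $P_HV^m|_H=T^m$ for all $m$.

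For the second step I would use the classical fact that every isometry is the restriction of a unitary to an invariant subspace; if one wants to prove it by hand, let $L:=K_0\ominus VK_0$ (the range of an isometry is closed), put $K:=K_0\oplus L\oplus L\oplus\cdots$, and define
\[
W(\xi,\ell_0,\ell_1,\ell_2,\dots):=(V\xi+\ell_0,\;\ell_1,\;\ell_2,\dots),
\]
where $V\xi\in VK_0$ and $\ell_0\in L$ are regarded as orthogonal vectors summing to an element of $K_0$. A Pythagoras computation shows $W$ preserves norms, and writing an arbitrary $\eta\in K_0$ as $\eta=V\xi+\ell_0$ along $VK_0\oplus L$ (and relabelling the tail) shows $W$ is onto; hence $W$ is unitary. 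Moreover $K_0$ is $W$-invariant and $W|_{K_0}=V$, so $W^m|_{K_0}=V^m$ for every $m$.

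Finally, since $H\subseteq K_0\subseteq K$ we have $P_H=P_HP_{K_0}$, so for $h\in H$ and $m\in\mathbb N$,
\[
P_HW^mh=P_H\big(W^m|_{K_0}\big)h=P_HV^mh=T^mh.
\]
Thus $W$ is a unitary operator on $K\supseteq H$ with $T^m=P_HW^m|_H$ for all $m$, which is exactly the claim.

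The construction is really the whole proof — nothing deep is concealed. The single load-bearing identity is $D_T^2=I-T^*T$, which is precisely what makes $V$ norm-preserving in the first step; the only point worth checking with some care is the surjectivity of the extension $W$ in the second step. I expect the mild bookkeeping with the infinite operator matrix defining $V$, and with the nested identifications $H\subseteq K_0\subseteq K$, to be the most error-prone part, but it is entirely routine.
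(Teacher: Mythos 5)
Your proof is correct; the paper does not actually prove this theorem (it only cites Sz.-Nagy's original article), so there is no internal argument to compare against. What you give is the standard two-step proof --- the Sch\"affer-type isometric dilation $V=\begin{pmatrix} T & 0\\ D_T & * \end{pmatrix}$ followed by the unitary extension of an isometry --- and all the steps (the defect identity $\|Tx\|^2+\|D_Tx\|^2=\|x\|^2$, the lower-triangularity giving $P_HV^m|_H=T^m$, and the surjectivity of $W$ via the decomposition $K_0=VK_0\oplus L$) check out.
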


There are quite a few uses of Sz.-Nagy's theorem let us just mention the Von Neumann inequality.
\begin{chapTheorem}  [{\textbf{von Neumann's inequality}  }]
Let $T$ be a contraction  acting on a Hilbert space $H$.
Then for any polynomial $p$ in $\mathbb{C}[z]$ we have that: 
$$\Vert p(T) \Vert \leq \Vert p(z) \Vert_{\infty,\mathbb{D}}.$$
\end{chapTheorem}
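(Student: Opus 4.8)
The plan is to derive the inequality directly from the Sz.-Nagy dilation theorem, which I am allowed to assume. First I would invoke that theorem to obtain a Hilbert space $K$ containing $H$ and a unitary operator $U$ on $K$ with $T^m = P_H U^m|_H$ for every $m \in \mathbb{N}$. The first real step is to upgrade this identity from monomials to an arbitrary polynomial: writing $p(z) = \sum_{k=0}^d a_k z^k$ and using linearity of the compression map $A \mapsto P_H A|_H$ together with $P_H I_K|_H = I_H$, one gets $p(T) = P_H\, p(U)\,|_H$. The only point that merits a moment's thought is the constant term, which is accounted for by $T^0 = I_H$.

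Next I would estimate $\Vert p(U)\Vert$. Since $U$ is unitary it is in particular normal, so the spectral theorem (equivalently, the continuous functional calculus for normal operators) gives $\Vert p(U)\Vert = \sup_{\lambda \in \sigma(U)} \vert p(\lambda)\vert$. The spectrum of a unitary operator is contained in the unit circle $\mathbb{T}$, and by the maximum modulus principle $\sup_{\vert\lambda\vert = 1}\vert p(\lambda)\vert = \sup_{\vert\lambda\vert \le 1}\vert p(\lambda)\vert = \Vert p\Vert_{\infty,\mathbb{D}}$. Hence $\Vert p(U)\Vert \le \Vert p\Vert_{\infty,\mathbb{D}}$.

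Finally, compressing to a subspace cannot increase the operator norm: the inclusion $H \hookrightarrow K$ is isometric and $\Vert P_H\Vert \le 1$, so $\Vert p(T)\Vert = \Vert P_H\, p(U)\,|_H\Vert \le \Vert p(U)\Vert$. Chaining this with the previous bound yields $\Vert p(T)\Vert \le \Vert p\Vert_{\infty,\mathbb{D}}$, which is the claim.

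There is no serious obstacle here once Sz.-Nagy's theorem is available; the argument is essentially a three-line reduction. If anything, the step demanding the most care is the passage from the power-dilation identity to $p(T) = P_H\, p(U)\,|_H$, because in general a compression is not multiplicative, i.e. $P_H f(U)g(U)|_H \neq (P_H f(U)|_H)(P_H g(U)|_H)$. But we never need multiplicativity: we only ever compress a single polynomial expression in $U$, so the identity follows from linearity alone, and no further property of the dilation is required.
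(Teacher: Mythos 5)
Your argument is correct and is precisely the derivation the paper has in mind: it gives no formal proof, only the remark that von Neumann's inequality follows from Sz.-Nagy's theorem via the continuous functional calculus for normal operators, which is exactly your chain of dilation, linearity of compression, the spectral bound $\Vert p(U)\Vert = \sup_{\sigma(U)}\vert p\vert$ with $\sigma(U)\subset\mathbb{T}$, the maximum modulus principle, and norm-contractivity of compression. Your closing observation that only linearity (not multiplicativity) of the compression is needed is exactly the right point to flag.
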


Though the original proof of von Neumann’s inequality predates Sz.-Nagy's theorem, the von Neumann’s  inequality can be derived from Sz.-Nagy's theorem by a simple use of the continuous functional calculus for normal operators.

We can also talk about the multi-variable case.
\begin{chapDef}
Let $T=(T_1,...,T_n)$ be a commuting tuple of operators acting on a Hilbert space $H$. 
A dilation of $T$
is a tuple of commuting operators $A=(A_1,...,A_n)$ acting on a Hilbert space $K$ containing $H$ such that
$$T_1^{m_1}...T_n^{m_n}=P_HA_1^{m_1}...A_n^{m_n} \upharpoonright _H ,$$
for all $m_1,...m_n\in \mathbb{N}$
\end{chapDef}

Ando's theorem \cite{ando1963pair} 
asserts that every pair of commuting contractions has a unitary dilation, i.e. , a dilation in which both operators 
are unitaries. This can be viewed as an analogue to Sz.-Nagy's theorem when $n=2$, and we again obtain the following von Neumann type inequality for two commuting contractions. 

\begin{chapTheorem}
Let $T_1,T_2$ be a pair of commuting contractions acting on a Hilbert space $H$ then
$$\Vert p(T_1,T_2)\Vert \leq \Vert p(z_1,z_2) \Vert _{\infty , \mathbb{D} ^2 }, $$
far all $p\in \mathbb{C}[z_1,z_2]$.
\end{chapTheorem}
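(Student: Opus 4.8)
The plan is to deduce this inequality from Ando's theorem in exactly the way von Neumann's inequality follows from Sz.-Nagy's theorem. First I would invoke Ando's theorem to produce a Hilbert space $K$ containing $H$ and a pair of commuting unitaries $U=(U_1,U_2)$ acting on $K$ which dilates $(T_1,T_2)$, so that $T_1^{m_1}T_2^{m_2}=P_HU_1^{m_1}U_2^{m_2}|_H$ for all $m_1,m_2\in\mathbb{N}$. Taking the appropriate complex-linear combination of these identities over the monomials occurring in a given $p\in\mathbb{C}[z_1,z_2]$ yields $p(T_1,T_2)=P_H\,p(U_1,U_2)\,|_H$.

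Since compressing an operator to a subspace cannot increase its operator norm, this already gives $\Vert p(T_1,T_2)\Vert\le\Vert p(U_1,U_2)\Vert$, and it remains only to bound $\Vert p(U_1,U_2)\Vert$ by $\Vert p\Vert_{\infty,\mathbb{D}^2}$. For this I would use that $U_1$ and $U_2$ are commuting normal operators, hence generate a commutative $C^*$-algebra and admit a joint continuous functional calculus; consequently $\Vert p(U_1,U_2)\Vert=\sup\{\,|p(\lambda_1,\lambda_2)|:(\lambda_1,\lambda_2)\in\sigma(U_1,U_2)\,\}$, where $\sigma(U_1,U_2)$ is the joint spectrum. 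Equivalently, one can feed $p$ into the spectral measure of the commuting pair $(U_1,U_2)$ directly.

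Finally, because each $U_i$ is unitary its spectrum lies in the unit circle, so the joint spectrum $\sigma(U_1,U_2)$ is contained in $\mathbb{T}^2\subset\overline{\mathbb{D}^2}$; hence $\Vert p(U_1,U_2)\Vert\le\sup_{\overline{\mathbb{D}^2}}|p|=\Vert p\Vert_{\infty,\mathbb{D}^2}$ by continuity of $p$, and combining the two displayed bounds proves the claim. The only substantive ingredient is Ando's theorem, which we are assuming; everything else is the elementary fact that compressions shrink norms together with the spectral theorem for commuting normal operators, so there is no real obstacle once Ando's theorem is in hand. I would only be mildly careful at the functional calculus step, verifying that the joint spectrum of a commuting pair of unitaries genuinely sits inside $\mathbb{T}^2$, which follows from $\sigma(U_i)\subset\mathbb{T}$ together with the fact that the joint spectrum projects onto each coordinate into the corresponding single-operator spectrum.
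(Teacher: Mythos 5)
Your proof is correct and follows exactly the route the paper intends: the paper presents this inequality as an immediate consequence of Ando's theorem, derived "by a simple use of the continuous functional calculus for normal operators" just as von Neumann's inequality follows from Sz.-Nagy's theorem. Your care about the joint spectrum of the commuting unitaries sitting inside $\mathbb{T}^2$ is the right point to check, and your justification is sound.
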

Varopoulos has shown in \cite{varopoulos1974inequality}
that for some positive integer $n$ greater then two 
there exists a commuting $n$-tuple of contractions for which the 
von Neumann inequality fails, and in particular, a unitary dilation can not exist.
He was also able to provide an explicit example of three commuting $5\times 5$
contractions for which this occurs. 
Following this Crabb and Davie gave another example of three $8\times 8 $ commuting contractions for which the von Neumann inequality does not hold (see \cite{crabb1975neumann}) and Holbrook was even able to find an example of three $4\times 4$ commuting contractions \cite{holbrook2001schur}. 
In \cite{parrott1970unitary} Parrott shows how to construct an example of three commuting contractions for which Von Neumann's inequality holds, but still a unitary dilation
does not exists. It is still unknown if there exists three $3\times 3$ commuting contractions for which the inequality fails.
 
Let us now introduce another type of dilation.

\begin{chapDef}
Let $X$ be a compact subset on the complex plane and let $\mathcal{R}(X)$ denote the algebra of all rational functions with poles off $X$. 
Let $T$ be an operator in $B(H)$. A \textbf{normal $\partial X$ dilation} of an operator $T\in B(H)$ will consist of of a normal
operator $N$ acting on some Hilbert space $K$ containing $H$ such 
that the spectrum of $N$ is contained in the (topological) boundary of $X$ and such
that for any $r\in \mathcal{R}(X)$
$$ r(T)=P_Hr(N) | _H .$$  

\end{chapDef}

A notion that is closely related to normal $\partial X$ dilations is the one of spectral sets.
%\begin{chapDef}
%Let $S$ be a subspace of $B(H)$, if $B$ is a $C^*$ algebra and $\phi:S\rightarrow B$
%is a linear map, then we define $\phi_n : M_n(S)\rightarrow M_n(B)$ by $\phi_n((a_{ij}))=(\phi(a_{i,j}))$.   
%\end{chapDef}
\begin{chapDef}
Let $X\subset \mathbb{C}$ be compact.
$X$ will be called a \textbf{spectral set} of $T\in B(H)$ if 
the spectrum of $T$ is contained in $X$ and
for every $f\in \mathcal{R}(X)$
$$\Vert f(T) \Vert \leq \Vert f(z) \Vert_{\infty,X}.$$
$X$ will be called a \textbf{complete spectral set} for $T$ if it is a spectral set for $T$ and if for any $l\in \mathbb{N}$ and any $l\times l$ matrix with 
entries in $\mathcal{R}(X)$ one has that 
$$\Vert (f_{i,j}(T)) \Vert \leq \sup_{z\in X} \Vert (f_{i,j}(z))\Vert,$$
where the norm on the left side of the inequality is the operator norm on the direct sum  $\oplus_{i=1} ^l H$.
\end{chapDef}

Arveson's dilation theorem \cite[Theorem 1.2.2.]{arveson1972subalgebras} shows that for an operator $T\in B(H)$ having a normal $\partial X$ dilation is equivalent to having $X$ as a complete spectral set for $T$. It is known that for certain cases it is enough for 
$X$ to be a spectral set for $T$ in order to guarantee the existence
of a normal $\partial X$ dilation \cite[p. 48, Theorem 4.4]{paulsen2002completely}. For example, if the interior of a compact set $X$ is simply connected and $\mathbb{C} \setminus X$ has only finitely many components then whenever $X$ is a spectral set for some operator $T$, a normal $\partial X$ dilation exists. Another example of a set with this property is the annulus as 
was shown be Agler in \cite{agler1985rational}. 

On the other hand it was shown by Dritschel and McCullough as well as by Agler, Harland and Raphael that for a bounded triply connected domain $X$ with boundary consisting of disjoint analytic curves there exists an operator for which $X$ is a spectral set
but does not have a normal $\partial X$ dilation. Agler, Harland and Raphael even gave an example of a $4\times 4$ matrix for which this occurs (see \cite{dritschel2005failure, agler2008classical} for details).

One of our motivations was to get a better understanding of these  phenomena through the finite dimensional case.
We now restrict ourselves to the the case where $H$ is a finite dimensional
Hilbert space and $T=(T_1,...,T_n)$ is a commuting tuple of contractions
in $B(H)$.

\begin{chapDef}
An $n$-tuple of contractions $T$ in $B(H)$ is said to have a unitary $m$-dilation if there exist a Hilbert space $K$ containing $H$ and an $n$-tuple of commuting unitaries $U=(U_1,...U_n)$ acting on $K$ such that $T_1^{m_1}...T_n^{m_k}=P_HU_1^{m_1}...U_n^{m_k}P_H$ for all 
non-negative integers $m_1,...m_k$ satisfying $m_1+...+m_k \leq m$.

\end{chapDef}

\begin{Ex}
Let $T$ be a contraction in $B(H)$. Then the operator 
$$\begin{bmatrix}
T&(I-TT^*)^{\frac{1}{2}}
\\(I-T^*T)^{\frac{1}{2}} &-T^*
\end{bmatrix}$$
 is a unitary 1-dilation of $T$ acting on $H\oplus H$.
\end{Ex}
It is worth mentioning that there are some uses to $m$-dilations such as given in
\cite[Theorem 4.7]{levy2010dilation}, which present a finite dimensional variant of the von Neumann inequality. In \cite{shalit2013sneaky} these dilations are used in order to provide a proof of the maximum modulus principle (in the unit disc) that it is based on linear algebra.

In \cite{mccarthy2013unitary} McCarthy and Shalit proved the following theorem:
\begin{chapTheorem}
\label{SHA} 
Let $H$ be a finite dimensional Hilbert space. Let $T_1,...,T_n$ be commuting contractions on $H$. The following are equivalent.
\begin{enumerate}
\item
The $k$-tuple $T_1,...,T_n$ has a unitary dilation.
\item
For every $m\in \mathbb{N}$, the $n$-tuple $T_1,...,T_n$ has a unitary $m$-dilation which acts on a finite dimensional space.
\end{enumerate}
\end{chapTheorem}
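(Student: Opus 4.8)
The plan is to prove the two implications separately: $(2)\Rightarrow(1)$ is short and uses Arveson's dilation theorem, while the substance is in $(1)\Rightarrow(2)$, which rests on a cubature (quadrature) theorem for operator-valued measures followed by Naimark's dilation theorem.

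For $(2)\Rightarrow(1)$, suppose $T$ has a unitary $m$-dilation $U^{(m)}=(U^{(m)}_1,\dots,U^{(m)}_n)$ for every $m\in\mathbb{N}$. For an $l\times l$ matrix $(p_{ij})$ of polynomials of degree at most $m$ we then have $(p_{ij}(T))=(P_H p_{ij}(U^{(m)})|_H)$, so $\|(p_{ij}(T))\|\le\|(p_{ij}(U^{(m)}))\|$; and since the joint continuous functional calculus of the commuting normal tuple $U^{(m)}$ is completely isometric and $\sigma(U^{(m)})\subseteq\mathbb{T}^n\subseteq\overline{\mathbb{D}}^n$, the right-hand side equals $\sup_{\sigma(U^{(m)})}\|(p_{ij}(\cdot))\|\le\|(p_{ij})\|_{\infty,\overline{\mathbb{D}}^n}$. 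Letting $m\to\infty$ shows that the unital homomorphism $p\mapsto p(T)$ is completely contractive with respect to the polydisc algebra $A(\mathbb{D}^n)$, that is, $\overline{\mathbb{D}}^n$ is a complete spectral set for $T$ (rational functions with poles off $\overline{\mathbb{D}}^n$ being uniform limits of polynomials there). By Arveson's dilation theorem this homomorphism dilates to a unital $*$-representation of $C(\mathbb{T}^n)$ — the Shilov boundary of $A(\mathbb{D}^n)$ being the torus — on a Hilbert space $K\supseteq H$; the images $U_1,\dots,U_n$ of the coordinate functions are commuting unitaries with $T_1^{m_1}\cdots T_n^{m_n}=P_H U_1^{m_1}\cdots U_n^{m_n}|_H$ for all multi-indices, so $U$ is a unitary dilation of $T$. (Finite-dimensionality of $H$ plays no role in this direction.)

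For $(1)\Rightarrow(2)$, let $U=(U_1,\dots,U_n)$ be a unitary dilation of $T$ on $K\supseteq H$, with $K$ possibly infinite dimensional. The spectral theorem gives a joint projection-valued measure $E$ on $\mathbb{T}^n$ with $U^\alpha=\int_{\mathbb{T}^n}z^\alpha\,dE(z)$ for all $\alpha\in\mathbb{N}^n$, and compressing to $H$ yields a regular $B(H)$-valued POVM $F=P_H E(\cdot)|_H$ on $\mathbb{T}^n$ with $F(\mathbb{T}^n)=I_H$ and $\int_{\mathbb{T}^n}z^\alpha\,dF(z)=P_H U^\alpha|_H=T^\alpha$ for all $\alpha$. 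Fix $m$ and let $\mathcal V\subseteq C(\mathbb{T}^n)$ be the finite-dimensional, self-adjoint, unital span of the monomials $z^\alpha\bar z^\beta$ with $|\alpha|+|\beta|\le m$. The key step is a cubature theorem for operator-valued measures, producing a finitely supported POVM $G=\sum_{j=1}^{N}G_j\,\delta_{z_j}$ with nodes $z_j\in\mathbb{T}^n$, positive weights $G_j\in B(H)$, $\sum_j G_j=I_H$, and $\int f\,dG=\int f\,dF$ for all $f\in\mathcal V$; in particular $\sum_j z_j^\alpha G_j=T^\alpha$ whenever $|\alpha|\le m$. Now apply Naimark's dilation theorem to $G$: since $G$ is supported on finitely many points and $H$ is finite dimensional, the dilating space $\widetilde K$ is finite dimensional (one may take $\widetilde K=\mathbb{C}^N\otimes H$), and it carries a projection-valued measure $\widetilde E$ on $\{z_1,\dots,z_N\}$ with $G_j=P_H\widetilde E(\{z_j\})|_H$. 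Setting $V_i=\sum_{j=1}^{N}(z_j)_i\,\widetilde E(\{z_j\})$ for $i=1,\dots,n$, the pairwise-orthogonal projections $\widetilde E(\{z_j\})$ sum to the identity and each $z_j\in\mathbb{T}^n$, so $(V_1,\dots,V_n)$ is a commuting tuple of unitaries on the finite-dimensional space $\widetilde K$, and for $m_1+\dots+m_n\le m$ one computes $P_H V_1^{m_1}\cdots V_n^{m_n}|_H=\sum_j z_j^{(m_1,\dots,m_n)}G_j=T_1^{m_1}\cdots T_n^{m_n}$. Thus $(V_1,\dots,V_n)$ is a unitary $m$-dilation of $T$ on a finite-dimensional space.

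The main obstacle is the operator-valued cubature theorem. In the scalar case one pushes the measure forward along the moment map $z\mapsto(f(z))_{f\in\mathcal V}$ and invokes Carath\'eodory's theorem inside the closed convex cone generated by the (compact) moment set, obtaining a finite positive combination of point masses on $\mathbb{T}^n$. For a POVM the same idea must be run in the finite-dimensional real vector space of Hermitian-operator-valued functionals on $\mathcal V$: one checks that $F|_{\mathcal V}$ lies in the closed convex cone generated by the elementary POVMs $A\,\delta_z$ with $z\in\mathbb{T}^n$ and $0\le A\in B(H)$, that this cone is closed by compactness, and then applies Carath\'eodory, recovering the normalization $\sum_j G_j=I_H$ from the $\alpha=\beta=0$ moment. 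Keeping the operator weights positive throughout is the delicate point, and is precisely what the body of the thesis establishes.
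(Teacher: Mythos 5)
Your proposal is correct and follows essentially the same route the paper takes for this statement (via its generalization, Theorem \ref{AAS} with $X=\mathbb{T}^n$): compress the joint spectral measure of the unitary dilation to a POVM, apply the operator-valued Tchakaloff/cubature theorem (Theorem \ref{W1}, proved exactly by the Carath\'eodory-plus-closed-cone argument you sketch), dilate the resulting finitely supported POVM by Naimark, and build the unitaries from the projections; the converse is Arveson's theorem applied after passing to a complete (polynomial) spectral set. The only difference worth noting is that the original McCarthy--Shalit proof cited for Theorem \ref{SHA} uses the Poisson kernel on the polydisc, whereas your argument coincides with the paper's Poisson-kernel-free generalization.
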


One of ours objectives was to generalize this theorem
to the case where the dilation will consist of normal operators with limitation on
the set which contains their spectrum.
The case of a unitary $m$-dilation can be considered as a normal dilation in which we require that each one of the operators in the dilation has its spectrum contained inside $\mathbb{T}.$  

Let us now state the main result of this thesis.

\begin{chapTheorem}
\label{I1}
Let $H$ be a Hilbert space of dimension $d$ and let $X\subset \mathbb{C}^n$ compact. Let $T_1,...,T_n$ be commuting operators on $H$. The following are equivalent.
\begin{enumerate}
\item
The $n$-tuple $T_1,...,T_n$ has a normal polynomial $X$ dilation (see \ref{PD} for the definition).
\item
For any $m\in \mathbb{N}$ there exists some 
finite dimensional Hilbert space $K$ containing $H$ and
 a normal $n$-tuple of commuting operators acting on $K$ 
 such that the joint spectrum of $N$ is contained in  $X$ and  
such that $$  q(T)=P_Hq(N)|_H$$ 
for all polynomials q of degree at most $m$.
\end{enumerate}
\end{chapTheorem}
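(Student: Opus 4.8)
The plan is to prove the two implications separately, since they are of very different character. The implication $(2) \Rightarrow (1)$ should be the easy direction: assuming that for every $m$ there is a finite-dimensional $K_m \supseteq H$ and a commuting normal tuple $N^{(m)}$ on $K_m$ with joint spectrum in $X$ realizing $q(T) = P_H q(N^{(m)})|_H$ for all $q$ of degree $\le m$, I would first observe that by compactness of $X$ the operators $N^{(m)}$ are uniformly bounded, so the map $q \mapsto P_H q(N^{(m)})|_H$ witnesses that $\Vert q(T)\Vert_{M_\ell} \le \sup_{z \in X}\Vert q(z)\Vert_{M_\ell}$ for all matrix-valued polynomials $q$ of degree $\le m$ (using that compressions are completely contractive). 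Letting $m \to \infty$ gives that $X$ is a complete spectral set for $T$ in the polynomial sense, and then Arveson's dilation theorem produces the normal polynomial $X$ dilation. I would need to check that the definition of "normal polynomial $X$ dilation'' in the (unshown) reference \ref{PD} is exactly the one matched by Arveson's theorem applied to the operator system generated by polynomials on $X$; this is a formality.

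The substantive direction is $(1) \Rightarrow (2)$, and here I would follow the strategy announced in the abstract. Starting from a normal polynomial $X$ dilation $N$ of $T$ on some (a priori infinite-dimensional) space $K$, the spectral theorem gives a projection-valued measure $E$ on $X$ with $q(T) = P_H\big(\int_X q \, dE\big)|_H$ for all polynomials $q$. Compressing to $H$ yields a $B(H)$-valued POVM $\mu(\cdot) = P_H E(\cdot)|_H$, and the moment condition becomes $q(T) = \int_X q \, d\mu$ for all polynomials $q$. The heart of the argument is a cubature (quadrature) result: for the fixed degree bound $m$, let $V \subseteq C(X)$ be the finite-dimensional space of polynomials of degree $\le m$ (together with their conjugates, so that $V$ is a $*$-closed subspace large enough to separate the relevant data); then there exist finitely many points $x_1,\dots,x_r \in X$ and positive operators (or positive scalars times rank-one pieces assembled appropriately) giving a finitely supported $B(H)$-valued POVM $\nu = \sum_j A_j \delta_{x_j}$ with $A_j \ge 0$, $\sum_j A_j = I_H$, and $\int_X f\, d\nu = \int_X f \, d\mu$ for every $f \in V$. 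The existence of such a cubature formula is a Tchakaloff/Carathéodory-type statement: the integral $\int_X f\, d\mu$ lies in the convex hull (over the compact set $X$) of the "evaluation data'' $f(x) \otimes (\text{positive operator})$, and a dimension count using that $V \otimes B(H)$ is finite-dimensional lets one reduce the support to finitely many atoms while keeping positivity and the total-mass normalization. I expect this cubature step — making the Carathéodory reduction work for operator-valued measures while preserving both positivity and the identity-sum constraint — to be the main obstacle, and the place where the bulk of the paper's technical work lies.

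Once the finitely supported POVM $\nu$ is in hand, the rest is mechanical. Apply Naimark's dilation theorem to $\nu$: since $\nu$ has finite support in the finite-dimensional setting, it dilates to a projection-valued measure $F$ on the finite set $\{x_1,\dots,x_r\}$ acting on a finite-dimensional space $K$ containing $H$, with $\nu(\cdot) = P_H F(\cdot)|_H$. Define $N_i = \sum_{j=1}^r (x_j)_i \, F(\{x_j\})$ for $i = 1,\dots,n$; these are commuting normal operators on $K$ (they are simultaneously diagonalized by the orthogonal family of spectral projections $F(\{x_j\})$), their joint spectrum is $\{x_1,\dots,x_r\} \subseteq X$, and for any polynomial $q$ of degree $\le m$ we get $P_H q(N)|_H = P_H\big(\sum_j q(x_j) F(\{x_j\})\big)|_H = \int_X q\, d\nu = \int_X q\, d\mu = q(T)$, as required. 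I would present this as three lemmas — the POVM/moment setup, the operator-valued cubature theorem, and the Naimark assembly — with the cubature theorem stated and proved in its own section.
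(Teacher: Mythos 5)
Your proposal follows essentially the same route as the paper: the easy direction via complete (polynomial) spectral sets and Arveson's dilation theorem, and the substantive direction via compressing the spectral measure to a POVM, proving a Tchakaloff/Carath\'eodory-type operator-valued cubature formula for a finite-dimensional space of continuous functions, and then assembling the finite-dimensional normal tuple with Naimark's theorem. The step you single out as the main obstacle --- making the Carath\'eodory reduction preserve positivity and the total-mass constraint --- is exactly the content of the paper's Theorem \ref{W1}, proved by showing the convex hull of the relevant moment set is closed.
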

In their proof McCarthy and Shalit make use of the 
 Poisson kernel on the polydisk, which is not at our disposal in the more general setting.
 In order to obtain our result
  we needed to find appropriate tools which will serve the role taken by the Poisson kernel, but this time for an arbitrary compact sets of $\mathbb{C}^n$.
 Doing so we were able to show that given a certain type of positive operator valued measure
and some finite set of functions which are continuous on the support of the measure we can find a cubature formula. We shall discuss this in greater detail as we proceed.

\begin{chapRemark}
If, in Theorem \ref{I1}, we set $X = \mathbb{T}^n$, then we recover Theorem \ref{SHA}.
\end{chapRemark}

\chapter{Preliminaries}
\section{The joint spectrum}
It is a well known theorem that for a bounded operator $B$ acting on a Banach space $X$ one can construct a holomorphic functional calculus, namely,
if we denote by $H(\sigma(B))$ the set of all
functions which are holomorphic in some neighborhood of the spectrum of $B$ then
the map $F \rightarrow F(T)$ from $H(\sigma(B))$ to $L(X)$ 
has the following  properties:
\begin{itemize}
\item
It extends the polynomial functional calculus.
\item
It is an algebra homomorphism from the algebra of holomorphic functions defined on a neighborhood of $\sigma(B)$ to $L(X)$
\item
It preserves uniform convergence on compact sets.
\end{itemize}

Given a tuple of commuting bounded operators acting on some Banach space
there are several analogues of what we can consider to be the ``spectrum''
of such tuple in order to construct a multi-variable holomorphic functional calculus.
In this section we shall briefly present two different ways to go about doing this and focus on some cases which are relevant to our discussion.
Let us begin with some notations.

\begin{chapDef}
Let $V$ be an open set in $\mathbb{C}^d$. We will say that a function $F: V \rightarrow \mathbb{C}$ is \textbf{holomorphic} on $V$ if it is locally expandable as a convergent power series in the variables $z_1,...,z_d$.
The algebra (with respect to point-wise addition and multiplication) of all such functions will be denoted be $H(V)$.
\end{chapDef}   
 
\begin{chapDef}
Let $K$ be a compact set in $\mathbb{C}^d$. A function $F$ will said to be \textbf{holomorphic on $K$} if there exists some open neighborhood $U$ of $K$ such that
$F\in H(U)$. The set of all such functions will be denoted by $H(K)$ thus $H(K)= \bigcup _U H(U)$ where $U$ runs over all the open neighborhoods of $K$. 

\end{chapDef}

We will present two different definitions for the ``spectrum of a commuting tuple
of operators''.
 In both cases we will call such a set the \textbf{joint spectrum} of the corresponding tuple 
and both will yield us a functional calculus with
the same properties mentioned before.
 We now turn to present the first approach:

\begin{chapDef}
Given a unital commutative Banch algebra $\textbf{A}$ and 
$A=(A_1,...,A_n) \in \textbf{A}^n $ we say $A$ is invertible if there exists $B=(B_1,...,B_n) \in \textbf{A}^n $ such that $$ \sum _i A_i B_i = 1_{ \textbf{A} }.$$
The \textbf{algebraic spectrum} of $A$ is defined to be as follows:
$$ \sigma_{\textbf{A}}(A)= \{ \lambda \in \mathbb{C}^n : A-\lambda= (A_1-\lambda_1 1_{ \textbf{A} },..., A_n - \lambda_n 1_{ \textbf{A} } ) \text{ is not invertible in \textbf{A}} \} $$

\end{chapDef}

It can be shown without great difficulty that 
$$ \sigma_{\textbf{A}}(A)= \{ (m(A_1),...,m(A_n)) : m \text{ is in the maximal ideal space of \textit{\textbf{A}} } \}.$$ 

One should take notice that this definition is dependent on a commutative algebra that contains the mentioned operators. 
In our context  $A_1,...A_n$ will always be bounded operators acting on some Hilbert space $H$ and 
all the algebras will be unital.
We also take notice that if $\textbf{A},\textbf{B}$
are two commutative unital sub-algebras of $B(H)$ containing $A_1,...,A_n$
and such that $\textbf{A} \subset \textbf{B}$ then $\sigma_{\textbf{B}}(A) \subset \sigma_{\textbf{A}}(A)$. Let us from now on denote by $\textbf{A}$ the commutative algebra generated
by $A_1,...,A_n$ in $B(H)$. Since any other commutative algebra
containing $A_1,...,A_n$ contains $\textbf{A}$ we have that $\sigma_{\textbf{A}}(A)$
is the maximal set (with respect to inclusion) of all such algebraic joint spectra of $A$. \\

The other type of spectrum we present  is called the \textbf{Taylor joint spectrum }.
The definition of this spectrum is considerably more elaborate, but has the advantage of being independent on the underlying algebra.

We start with the following notation.

Let $\Lambda$ be the exterior algebra on $n$ generators $e_1,...e_n$, with identity 
$e_0\equiv 1$. $\Lambda$ is the algebra of forms in $e_1,...e_n$ with complex coefficients, subject to the collapsing property $e_ie_j+e_je_i=0$ ($1\leq i,j \leq n$). Let $E_i: \Lambda \rightarrow \Lambda$ denote the creation operator, given by
$E_i \xi = e_i \xi $ ($\xi \in \Lambda, 1 \leq i \leq n$).
 If we declare $ \{ e_{i_1}... e_{i_k} : 1 \leq i_1 < ... < i_k \leq n \}$ to be an 
 orthonormal basis, the exterior algebra $\Lambda$ becomes a Hilbert space, admitting an orthogonal decomposition $\Lambda = \oplus_{k=1} ^n \Lambda^k$ where $\dim \Lambda ^k = {n \choose k}$. Thus, each $\xi \in \Lambda$ admits a unique orthogonal decomposition
 $ \xi = e_i \xi' + \xi''$, where $\xi'$ and $\xi ''$ have no $e_i$
contribution. It then follows that that  $ E_i ^{*} \xi = \xi' $, 
and we have that each $E_i$ is a partial isometry, satisfying $E_i^*E_j+E_jE_i^*=\delta_{i,j}$.
Let $X$ be a normed space, let $A=(A_1,...,A_n)$ be a commuting $n$-tuple of bounded
operators on $X$ and set $\Lambda(X)=X\otimes_{\mathbb{C}} \Lambda$.
We define $D_A: \Lambda (X) \rightarrow \Lambda (X)$ by 

$$D_A = \sum_{i=1}^n A_i \otimes E_i .$$ Then it is easy to see $D_A^2=0$, so
$\Ran D_A \subset \Ker D_A$.
The commuting $n$-tuple is said to be \textbf{non-singular} on $X$ if $\Ran D_A=\Ker D_A$.
\begin{chapDef}
The Taylor joint spectrum of $A$ on $X$ is the set 
$$\sigma_T(A,X) = \{ \lambda\in \mathbb{C}^n : A-\lambda \text{ is singular} \}.$$
\end{chapDef}
\begin{chapRemark}
The decomposition $\Lambda=\oplus_{k=1}^n \Lambda^k$
gives rise to a cochain complex $K(A,X)$, known as the Koszul complex associated to $A$ on $X$, as follows:
$$K(A,X):0 \rightarrow \Lambda^0(X)\xrightarrow{D_A^0}... \xrightarrow{D_A^{n-1}} \Lambda^n(X) \rightarrow 0 ,$$
where $D_A^{k}$ denotes the restriction of $D_A$ to the subspace $\Lambda^k(X)$. Thus,
$$ \sigma_T(A,X) = \{ \lambda\in \mathbb{C}^n : K(A-\lambda ,X)\text{ is not exact} \} .$$
\end{chapRemark}

Another property of the Taylor spectrum is that it is the smallest spectrum in the 
 following sense:
  
\begin{chapTheorem}
\label{ABC}
 
Let $A=(A_1,...,A_n)$ be a commuting tuple in $B(H)$ and let $\textbf{B}$ some commutative
 sub-algebra containing them. Then $\sigma_T(A) \subset \sigma_{\textbf{B}}(A)$.
 \end{chapTheorem}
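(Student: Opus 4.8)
The plan is to establish the contrapositive by exhibiting an explicit contracting homotopy for the Koszul complex. So I would start from a point $\lambda \in \mathbb{C}^n \setminus \sigma_{\textbf{B}}(A)$ and, after replacing each $A_i$ by $A_i - \lambda_i$ (which again lies in the unital commutative algebra $\textbf{B}$), reduce to the case $\lambda = 0$; invertibility of $A$ in $\textbf{B}$ then furnishes $B_1,\dots,B_n \in \textbf{B}$ with $\sum_{i=1}^{n} A_i B_i = 1_H$. Because $\textbf{B}$ is commutative, every $B_j$ commutes with every $A_i$ --- and this commutativity is the single place where the hypothesis will be used.

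Next I would introduce the operator
$$ S \;=\; \sum_{j=1}^{n} B_j \otimes E_j^{*} \;\in\; B(\Lambda(H)) $$
and compute $D_A S + S D_A$. Expanding gives $D_A S = \sum_{i,j} A_i B_j \otimes E_i E_j^{*}$ and $S D_A = \sum_{i,j} B_j A_i \otimes E_j^{*} E_i$; invoking $A_i B_j = B_j A_i$ together with the relation $E_i E_j^{*} + E_j^{*} E_i = \delta_{i,j}$ (the adjoint form of $E_i^{*}E_j + E_j E_i^{*} = \delta_{i,j}$ recorded above), the off-diagonal terms cancel in pairs and the diagonal terms collapse, yielding
$$ D_A S + S D_A \;=\; \sum_{i=1}^{n} A_i B_i \otimes 1_{\Lambda} \;=\; 1_H \otimes 1_{\Lambda} \;=\; I_{\Lambda(H)}. $$

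From this identity the result follows at once: for $\xi \in \Ker D_A$ one has $\xi = D_A S\xi + S D_A \xi = D_A(S\xi) \in \Ran D_A$, so $\Ker D_A \subseteq \Ran D_A$, while the opposite inclusion is automatic from $D_A^{2} = 0$. Hence $\Ran D_A = \Ker D_A$; that is, $A - \lambda$ (in the original, un-normalized notation) is non-singular on $H$, so $\lambda \notin \sigma_T(A,H)$. Since $\lambda$ was an arbitrary point outside $\sigma_{\textbf{B}}(A)$, this gives $\sigma_T(A,H) \subseteq \sigma_{\textbf{B}}(A)$.

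I do not expect a genuine obstacle here: the argument is a short homotopy computation. The only delicate point is the bookkeeping with the creation and annihilation operators on the exterior algebra --- in particular checking that the cross terms really do cancel and that this cancellation uses precisely the commutativity of the tuple, since without that hypothesis the homotopy identity breaks down.
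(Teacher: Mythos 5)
Your argument is correct: the contracting homotopy $S=\sum_j B_j\otimes E_j^*$ together with commutativity and the relation $E_iE_j^*+E_j^*E_i=\delta_{i,j}$ gives $D_AS+SD_A=I$, hence $\Ker D_A\subseteq \Ran D_A$ and non-singularity of $A-\lambda$. The paper itself offers no proof, only the citation to Taylor's Lemma 1.1, and what you have written is exactly that standard argument, so there is nothing to compare beyond noting that you have supplied the omitted details.
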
 
See \cite[Lemma 1.1]{taylor1970joint} for more Details.
\\

We now turn to focus on two specific cases.
\\
\\
\subsection{The finite dimensional case}

We remind the reader of a known result in linear algebra 
( \cite{halmos1947finite} Sec. 56 Thm. 1).
If $A_1,...,A_n$ is an $n$-tuple of commuting $d\times d$ matrices over the complex field, then there exists a unitary matrix $U$ such that for every $1\leq j \leq n$, $U^*A_jU$ is upper triangular. Thus,

$$U^*A_jU=\begin{bmatrix}
\lambda_1^{(j)}&*&*&...&*\\ 0& \lambda_2^{(j)}&*&...&*
\\ 0&0& \lambda_3^{(j)} &...&*
\\ \vdots & \vdots & \vdots & \ddots & \vdots
\\ 0 & 0 & 0 & 0 & \lambda_d^{(j)}
\end{bmatrix} \text{ , j=1,...n,}  $$
where $\lbrace \lambda_1^{(j)},...,\lambda_d^{(j)} \rbrace$ is the spectrum 
of $A_j$ (perhaps with repetitions).

\begin{secProp}
\label{MM}
Let $A=(A_1,...,A_n)$ be commuting $d \times d$ matrices over the complex field, $\textbf{A}$ be the commutative unital Banach algebra generated
by $A$ and let $M=\lbrace (\lambda_k^{(1)},\lambda_k^{(2)},...,\lambda_k^{(n)}) 
;$ $ k=1,...,d \rbrace \subset \mathbb{C}^n$.
Then 
$\sigma_\textbf{A}(A)$ 
 is equal to $M$.
\end{secProp}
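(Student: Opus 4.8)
The plan is to compute $\sigma_{\textbf{A}}(A)$ by means of the description, recalled above, of the algebraic joint spectrum as the set of values taken by characters: $\sigma_{\textbf{A}}(A)=\{(m(A_1),\dots,m(A_n)) : m \text{ a character of } \textbf{A}\}$. First I would reduce to the triangular case. Unitary conjugation $B\mapsto U^{*}BU$ is an algebra automorphism of $B(H)$; it carries $\textbf{A}$ isomorphically onto the commutative unital algebra generated by $U^{*}A_1U,\dots,U^{*}A_nU$, it sends a representation $\sum_i(A_i-\lambda_i)B_i=I$ to one for the conjugated tuple, and it leaves both the algebraic joint spectrum (computed in the respective algebra) and the set $M$ unchanged. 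Hence I may assume that each $A_j$ is already upper triangular with diagonal $(\lambda_1^{(j)},\dots,\lambda_d^{(j)})$, so that $\textbf{A}$ consists entirely of upper triangular $d\times d$ matrices.

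For $M\subseteq\sigma_{\textbf{A}}(A)$, I would check that for each $k\in\{1,\dots,d\}$ the map $m_k\colon\textbf{A}\to\mathbb{C}$ sending a matrix to its $k$-th diagonal entry is a unital algebra homomorphism: for upper triangular $B,C$ one has $(BC)_{kk}=\sum_{l}B_{kl}C_{lk}=B_{kk}C_{kk}$, since $B_{kl}=0$ for $l<k$ and $C_{lk}=0$ for $l>k$. Thus $m_k$ is a character with $m_k(A_j)=\lambda_k^{(j)}$, so $(\lambda_k^{(1)},\dots,\lambda_k^{(n)})\in\sigma_{\textbf{A}}(A)$ for every $k$, which gives the inclusion.

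For $\sigma_{\textbf{A}}(A)\subseteq M$, let $m$ be any character of $\textbf{A}$ and put $\mu=(m(A_1),\dots,m(A_n))$. For a polynomial $p$ one has $m(p(A))=p(\mu)$, so $p(A)-p(\mu)I\in\ker m$; since $\ker m$ is a proper ideal it contains no invertible element, and as $\textbf{A}$ is finite dimensional (so an inverse in $B(H)$ would, by Cayley--Hamilton, already lie in $\textbf{A}$) this forces $p(\mu)$ to be an eigenvalue of $p(A)$. But $p(A)$ is again upper triangular, with diagonal entries $p(\lambda_1),\dots,p(\lambda_d)$, where $\lambda_k:=(\lambda_k^{(1)},\dots,\lambda_k^{(n)})$; hence $p(\mu)\in\{p(\lambda_1),\dots,p(\lambda_d)\}$ for \emph{every} polynomial $p$. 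If $\mu$ were different from each $\lambda_k$, I would pick for each $k$ an affine function $\ell_k$ with $\ell_k(\lambda_k)=0$ and $\ell_k(\mu)\neq0$ and take $p=\ell_1\cdots\ell_d$: then $p$ vanishes at every $\lambda_k$ while $p(\mu)\neq0$, a contradiction. Therefore $\mu\in M$, and combining the two inclusions yields $\sigma_{\textbf{A}}(A)=M$.

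The points needing care are all routine: the multiplicativity of the diagonal-entry maps (where the triangular form is used essentially), the spectral-mapping fact that the eigenvalues of $p(A)$ are exactly the $p(\lambda_k)$, and the elementary polynomial-separation argument; I do not expect a genuine obstacle. If one prefers to avoid characters, the inclusion $\sigma_{\textbf{A}}(A)\subseteq M$ can be obtained directly: for $\lambda\notin M$ choose an affine $\ell$ with $\ell(\lambda)=0$ but $\ell(\lambda_k)\neq0$ for all $k$ --- possible because the space of affine functions vanishing at $\lambda$ is not the union of the finitely many proper subspaces on which some $\ell(\lambda_k)=0$ --- write $\ell(z)=\sum_i a_i(z_i-\lambda_i)$, and observe that $\ell(A)=\sum_i a_i(A_i-\lambda_i I)$ is upper triangular with nonzero diagonal, hence invertible in $\textbf{A}$; taking $B_i=a_i\,\ell(A)^{-1}$ exhibits $A-\lambda$ as invertible in $\textbf{A}$.
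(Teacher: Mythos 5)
Your proof is correct, but it follows a genuinely different route from the paper's. The paper works directly with the definition of joint invertibility: for $M\subseteq\sigma_{\textbf{A}}(A)$ it uses the density of polynomials in $\textbf{A}$ and the openness of the set of invertibles to reduce to showing that $\sum_j p_j(A)(A_j-\lambda_k^{(j)})=q(A)$ has $0$ as an eigenvalue (since $q(\lambda_k)=0$ appears on the diagonal of the triangularized matrix); for the reverse inclusion it chooses Lagrange interpolation polynomials $q_j$ with $q_j(\lambda_k^{(j)})=\overline{\lambda_k^{(j)}-\alpha_j}$ so that $\sum_j q_j(A_j)(A_j-\alpha_j)$ is upper triangular with diagonal entries $\sum_j\lvert\lambda_k^{(j)}-\alpha_j\rvert^2$, and singularity forces one of these to vanish. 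You instead invoke the Gelfand description $\sigma_{\textbf{A}}(A)=\{(m(A_1),\dots,m(A_n))\}$ (which the paper recalls but never actually uses in its proof), exhibit the diagonal-entry maps as explicit characters of the triangularized algebra for one inclusion, and for the other combine the spectral-mapping observation with an affine separating polynomial, or, in your direct variant, a hyperplane-avoidance choice of $\ell$ together with Cayley--Hamilton. Your approach buys two things: it identifies concrete elements of the maximal ideal space rather than arguing by contradiction through density, and it makes explicit a point the paper leaves implicit, namely that non-invertibility in $\textbf{A}$ of an element of $M_d(\mathbb{C})$ is the same as singularity as a matrix (the paper's final step, passing from ``$\sum_j q_j(A_j)(A_j-\alpha_j)$ is not invertible in $\textbf{A}$'' to ``some diagonal entry vanishes,'' needs exactly your Cayley--Hamilton remark). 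The paper's argument, in exchange, is entirely self-contained from the definition of invertibility of a tuple and does not rely on the character characterization of the algebraic joint spectrum.
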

\begin{proof}
We begin by showing $M \subset \sigma_\textbf{A}(A)$.
Let $\lambda_k=(\lambda_k^{(1)},\lambda_k^{(2)},...,\lambda_k^{(n)})$
be some point in $M$.
 We want to show that for every 
$C_1,...,C_n$ in $\textbf{A}$, 
$\sum_jC_j(A_j-\lambda_k^{(j)})$ is not the identity.
Since the invertible elements form an open set and the polynomials in 
$A_1,...,A_n$ are dense in $\textbf{A}$ it will suffice to show that for any
$p_1,...,p_n\in \mathbb{C}[z_1,...,z_n]$, the operator
 $\sum_jp_j(A)(A_j-\lambda_k^{(j)})$
is not invertible in $\textbf{A}$.
Now let $p_1,...,p_n\in \mathbb{C}[z_1,...,z_n]$.
Note that for every $p \in \mathbb{C}[z_1,...,z_n]$ we have:

$$p(A)=U\begin{bmatrix}

p(\lambda_1^{(1)},...,\lambda_1^{(n)})&*&*&...&*\\ 0& 
p(\lambda_2^{(1)},...,\lambda_2^{(n)})&*&...&*
\\ 0&0& 
p(\lambda_3^{(1)},...,\lambda_3^{(n)}) &...&*
\\ \vdots & \vdots & \vdots & \ddots & \vdots
\\ 0 & 0 & 0 & 0 & 
p(\lambda_d^{(1)},...,\lambda_d^{(n)})
\end{bmatrix} U^*.$$
Thus if we set $q(z_1,...,z_n)=\Sigma_jp_j(z_1,...,z_n)(z_j-\lambda_k^{(j)})$,
then $\Sigma_jp_j(A)(A_j-\lambda_k^{(j)})= 
 q(A)=$ $$U\begin{bmatrix}

q(\lambda_1^{(1)},...,\lambda_1^{(n)})&*&*&...&*\\ 0& 
q(\lambda_2^{(1)},...,\lambda_2^{(n)})&*&...&*
\\ 0&0& 
q(\lambda_3^{(1)},...,\lambda_3^{(n)}) &...&*
\\ \vdots & \vdots & \vdots & \ddots & \vdots
\\ 0 & 0 & 0 & 0 & 
q(\lambda_d^{(1)},...,\lambda_d^{(n)})
\end{bmatrix}U^*.$$
Since $q(\lambda_k^{(1)},...,\lambda_k^{(n)})=0$ we get that $0$ is an eigenvalue for $\Sigma_jp_j(A)(A_j-\lambda_k^{(j)})$, that is, $\Sigma_jp_j(A)(A_j-\lambda_k^{(j)})$
is not invertible. This shows $\lambda_k$ is in $\sigma_{\textbf{A}}(A)$.

In the other direction let $\alpha =(\alpha_1,...,\alpha_n)\in \mathbb{C}^n$
and assume that for every $C_1,...,C_n $ in $ \textbf{A}$ we have that
$\Sigma_j C_j(A_j-\alpha_j)$ is not invertible. We will show $\alpha=\lambda_k$ for
some $k$.

Now for every $j=1,...,n$ we have 
$$A_j-\alpha_jI=U\begin{bmatrix}

\lambda_1^{(j)}-\alpha_j&*&*&...&*\\ 0& 
\lambda_2^{(j)}-\alpha_j&*&...&*
\\ 0&0& 
\lambda_3^{(j)}-\alpha_j &...&*
\\ \vdots & \vdots & \vdots & \ddots & \vdots
\\ 0 & 0 & 0 & 0 & 
\lambda_d^{(j)}-\alpha_j
\end{bmatrix}U^*.$$
Let $q_j \in \mathbb{C}[z_j]$ be such that
$q_j(\lambda_k^{(j)})=\overline{\lambda_k^{(j)} - \alpha_j}$ for every $k=1,...,d$ 
(one can take for example a suitable Lagrange interpolation polynomial).
Then $$q_j(A_j)(A_j-\alpha_jI)=
U\begin{bmatrix}

\vert \lambda_1^{(j)}-\alpha_j \vert ^2 &*&*&...&*\\ 0& 
\vert \lambda_2^{(j)}-\alpha_j \vert^2 &*&...&*
\\ 0&0& 
\vert \lambda_3^{(j)}-\alpha_j \vert^2 &...&*
\\ \vdots & \vdots & \vdots & \ddots & \vdots
\\ 0 & 0 & 0 & 0 & 
\vert \lambda_d^{(j)}-\alpha_j \vert^2
\end{bmatrix}U^*,$$

and so we get

 $$\Sigma_j q_j(A_j)(A_j-\alpha_j)= 
U\begin{bmatrix}

\Sigma_j \vert \lambda_1^{(j)}-\alpha_j \vert ^2&*&*&...&*\\ 0& 
\Sigma_j \vert \lambda_2^{(j)}-\alpha_j \vert ^2 &*&...&*
\\ 0&0& 
\Sigma_j \vert \lambda_3^{(j)}-\alpha_j \vert  ^2 &...&*
\\ \vdots & \vdots & \vdots & \ddots & \vdots
\\ 0 & 0 & 0 & 0 & 
\Sigma_j \vert \lambda_d^{(j)}-\alpha_j \vert ^2
\end{bmatrix}U^*
.$$

By assumption $\Sigma_j q_j(A_j)(A_j-\alpha_j)$ is singular thus we get that for some $1\leq k \leq d$ we have $\Sigma_j \vert \lambda_{k}^{(j)}-\alpha_j \vert ^2=0$ which implies $\alpha_j=\lambda_k^{(j)}$ for $j=1,..,n$ and we are done.

\end{proof}

\begin{secProp}
Let $H$ be a Hilbert space of dimension $d$ and let $A=(A_1,...,A_n)$
be a commuting tuple of operators acting on $H$.
Then $\sigma_T(A)= \sigma_{\textbf{A}}(A)$.

\end{secProp}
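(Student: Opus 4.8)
The plan is to prove the two inclusions separately and combine them. First, by Theorem \ref{ABC} applied to the commutative algebra $\textbf{A}$ generated by $A_1,\dots,A_n$, we immediately get $\sigma_T(A) \subset \sigma_{\textbf{A}}(A)$, so only the reverse inclusion $\sigma_{\textbf{A}}(A) \subset \sigma_T(A)$ requires work. By Proposition \ref{MM} we know $\sigma_{\textbf{A}}(A) = M = \{\lambda_k = (\lambda_k^{(1)},\dots,\lambda_k^{(n)}) : k = 1,\dots,d\}$, the set of ``diagonal eigenvalue tuples'' obtained from a simultaneous upper-triangularization $U^*A_jU$. So it suffices to show that each such $\lambda_k$ lies in the Taylor spectrum, i.e.\ that $A - \lambda_k$ is singular: the Koszul complex $K(A - \lambda_k, H)$ fails to be exact.

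The key step is to exhibit, for a fixed $\lambda_k \in M$, a nonzero element of $\Ker D_{A-\lambda_k}$ that is not in $\Ran D_{A-\lambda_k}$. Equivalently, I would show that the alternating sum of dimensions of the Koszul cohomology groups — the Euler characteristic — is zero (it always is, for a finite-dimensional complex built from commuting operators, since $D_A^2=0$ forces $\sum (-1)^k \dim \Lambda^k(H) = 0$ automatically... actually the Euler characteristic of the complex is always $\sum_k (-1)^k \binom{n}{k} \dim H = 0$ for $n \geq 1$, so that alone gives nothing). So instead I would argue directly at the bottom of the complex: exactness at $\Lambda^0(H) = H$ would require $D_{A-\lambda_k}^0$ to be injective, i.e.\ the map $x \mapsto \sum_i (A_i - \lambda_k^{(i)})x \otimes e_i$ to have trivial kernel, which means there is no common ``eigenvector-like'' vector. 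But using the simultaneous triangularization, the last basis vector $U\varepsilon_d$ (where $\varepsilon_d$ is the $d$-th standard basis vector) satisfies $(A_j - \lambda_k^{(j)})U\varepsilon_d$... no — that only works when $\lambda_k$ corresponds to the \emph{last} diagonal entry. The honest route: reorder the triangularization so that the chosen index $k$ appears last (one can always conjugate by a permutation to move any chosen diagonal position to the bottom while keeping all $A_j$ upper triangular, since the $A_j$ share the triangularizing flag — though moving a middle entry to the bottom need not preserve triangularity, so more care is needed).

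Given this subtlety, the cleaner approach is: since $\sigma_T$ is known (from Taylor's work, and as the excerpt emphasizes) to be contained in every algebraic spectrum and to be nonempty and to satisfy the spectral mapping property, and since in finite dimensions $\sigma_T(A)$ must contain enough points, I would instead invoke the converse inclusion via a counting/projection argument: restrict the flag. Concretely, let $H_0 \subset H_1 \subset \dots \subset H_d = H$ be the common invariant flag from the triangularization, with $\dim H_j = j$. For the index $k$, consider the quotient $H_k / H_{k-1}$, a one-dimensional space on which each $A_j$ acts as multiplication by $\lambda_k^{(j)}$. The $n$-tuple induced on this quotient is $(\lambda_k^{(1)},\dots,\lambda_k^{(n)})\cdot \mathrm{id}$, whose Taylor spectrum is exactly $\{\lambda_k\}$. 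Then I would use the standard fact (provable from the long exact sequence in Koszul cohomology associated to a short exact sequence of modules $0 \to H_{k-1} \to H_k \to H_k/H_{k-1} \to 0$, together with the projection property of the Taylor spectrum) that $\sigma_T$ behaves well under such filtrations: $\sigma_T(A, H_k/H_{k-1}) \subset \sigma_T(A, H_k) \subset \sigma_T(A, H)$, the last inclusion because $H_k$ is $A$-invariant. Hence $\lambda_k \in \sigma_T(A,H)$.

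The main obstacle I anticipate is justifying that $\sigma_T(A, N) \subset \sigma_T(A, H)$ when $N$ is an invariant (or co-invariant) subspace, and similarly for the quotient — these are the ``projection property'' statements for the Taylor spectrum, and while standard, they rely on the long exact cohomology sequence of the Koszul complex, which I would either cite from \cite{taylor1970joint} (the same reference already used for Theorem \ref{ABC}) or sketch. With that in hand, the proof is short: $\sigma_{\textbf{A}}(A) = M = \bigcup_k \{\lambda_k\} \subset \sigma_T(A)$ by the filtration argument, and $\sigma_T(A) \subset \sigma_{\textbf{A}}(A)$ by Theorem \ref{ABC}, giving equality.
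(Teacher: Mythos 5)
Your setup is right --- one inclusion from Theorem \ref{ABC}, and the identification $\sigma_{\textbf{A}}(A)=M$ from Proposition \ref{MM}, so that everything reduces to showing each $\lambda_k\in\sigma_T(A)$ --- and your instinct to use the common invariant flag and the exact sequence $0\to H_{k-1}\to H_k\to H_k/H_{k-1}\to 0$ is the right one. But the two inclusions your ``cleaner approach'' rests on, namely $\sigma_T(A,H_k/H_{k-1})\subset\sigma_T(A,H_k)$ and $\sigma_T(A,H_k)\subset\sigma_T(A,H)$, are not what the long exact sequence (Taylor's Lemma 1.2) gives you. That lemma only says that each of the three spectra $\sigma_T(A,Y)$, $\sigma_T(A,X)$, $\sigma_T(A,X/Y)$ is contained in the \emph{union} of the other two. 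The monotonicity statements you want are genuinely false in infinite dimensions (already for $n=1$: the bilateral shift restricted to its invariant subspace $\ell^2(\mathbb{N})$ has spectrum $\overline{\mathbb{D}}\not\subset\mathbb{T}$), so there is nothing to cite; any proof of them in finite dimensions has to use finite-dimensionality in an essential way, and in fact they are more or less equivalent to the proposition you are trying to prove. As written, this is a real gap, not a routine verification.

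The repair is to restructure the argument as an induction on $\dim H$ using only the union form of Lemma 1.2, which is what the paper does. Take a simultaneous triangularization, let $V=\operatorname{span}\{e_1\}$; then $e_1$ is a \emph{common eigenvector}, so $D^0_{A-\lambda_1}(e_1)=\sum_i(A_i-\lambda_1^{(i)})e_1\otimes e_i=0$, the Koszul complex fails exactness at stage $0$, and $\lambda_1\in\sigma_T(A,H)$ directly --- this is the sound special case of your first (abandoned) attempt, and the induction arranges that it is the only case you ever need. Lemma 1.2 then gives $\sigma_T(A,H/V)\subset\sigma_T(A,H)\cup\sigma_T(A,V)=\sigma_T(A,H)\cup\{\lambda_1\}$, while the induction hypothesis applied to the induced (still upper triangular, commuting) tuple on $H/V$ gives $\sigma_T(A,H/V)=\{\lambda_2,\dots,\lambda_d\}$. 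Since $\lambda_1\in\sigma_T(A,H)$ already, every $\lambda_k$ lands in $\sigma_T(A,H)$, and together with $\sigma_T(A)\subset\sigma_{\textbf{A}}(A)$ you are done. So your outline is salvageable, but the step you labelled ``standard'' is precisely where the argument must be rebuilt.
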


\begin{proof}
For $d=1$ we get $A_i=\lbrace \alpha_i \rbrace$. 
One can easily see that for $\alpha=(\alpha_1,...,\alpha_n)$ 
the Koszul complex of $A-\alpha$ is the zero complex, thus $\alpha \in \sigma_T(A)$.
By
the preceding together with \ref{ABC} we get $\sigma_T(A)\subset \sigma_{\textbf{A}}(A)=\lbrace \alpha \rbrace$.
 We conclude that $\sigma_T(A)=\sigma_{\textbf{A}}(A)$.
 
Now assume this holds for $d<j$ and let $A=(A_1,...,A_n)$ be commuting 
operators acting on a Hilbert space $H$ of dimension $j$.
Let $e_1,...,e_j$ be some orthonormal basis for which the matrix representation of each $A_i$ is upper triangular, note that $A_ke_1=\lambda_1^{(k)}$
thus $e_1$ is a common eigenvector of $A_1,...,A_n$.
Let $V=Span\lbrace e_1 \rbrace$ and denote by $\sigma_T(A,V)$ and
$\sigma_T(A,{H}/{V})$ the Taylor spectrum of the operators induced
by $A$ on $V$ and ${H}/{V}$ respectably. 
By lemma 1.2 of \cite{taylor1970joint} we get that $\sigma_T(A,H/V) \subset 
\sigma_T(A) \bigcup \sigma_T(A,V)$.
We denote by $C_k$ the $(j-1)\times(j-1)$ matrix that is obtained by removing the first row and column from the representation matrix of $A_k$ (with respect to $e_1,...,e_j$). Note that $C_k$ is the representing matrix of the operator induced by $A_k$ on the quotient space and that 
$C=(C_1,...,C_n)$ is a commuting tuple of upper triangular matrices.
By induction we 
get $\sigma_T(C)=\lbrace (\lambda_k^{(1)},...,\lambda_k^{(n)} )\vert k=2,...,d\rbrace $.
Now since $T_k\upharpoonright_ V=\lambda_1^{(k)}$ we get by induction that
 $ \sigma_T(A,V)=\lbrace (\lambda_1^{(1)},...,\lambda_1^{(n)} )\rbrace$.
 So if we will show $\sigma_T(A,V)\subset \sigma_T(A)$
we will be done. 
Let $D_\lambda$ be the Koszul map associated to the Koszul complex of
 $A-(\lambda_1^{(1)},...,\lambda_1^{(n)} )$. Note that since $e_1$
 is a common eigenvector of $A$ we have that $e_1$ is in the intersection of the kernels of $A_k-\lambda_k$ and one can easily see that $D_\lambda$ restricted to
 $\Lambda^1(H)$ is not injective and thus the 
  Koszul complex of $A-(\lambda_1^{(1)},...,\lambda_1^{(m)} )$ is not exact and we are done.
\end{proof}
\begin{secRemark}
Since  $\sigma_T(A)= \sigma_{\textbf{A}}(A)$ it follows from previews discussion that for any commutative algebra $\textbf{B}$ 
containing $\textbf{A}$ we have that $\sigma_T(A)= \sigma_{\textbf{B}}(A).$ 
\end{secRemark}
There is another type of joint spectrum we have not mentioned yet which was introduced  Waelbroeck. 
\begin{secDef}
Let $A=(A_1,...,A_n)$ be a tuple of commuting bounded operators acting on a Hilbert space $H$. The \textbf{Waelbroeck joint spectrum} of $A$ which we will denote by 
$\sigma_W(A)$ is defined to be the set of all
$\lambda=(\lambda_1,...,\lambda_n)\in \mathbb{C}^n$ such that $q(\lambda)$ belongs to the spectrum of $\sigma (q(A))$ for every multivariate polynomial $q=q(z_1,...,z_n)$.
\end{secDef}
\begin{secRemark}
In \cite[Section 1.1]{arveson1972subalgebras} it is shown that the Waelbroeck joint spectrum is the algebraic joint spectrum of a tuple of commuting operators with respect to the smallest inverse closed, unital, commutative Banach algebra generated by these operators.   
By the previous remark we have that for matrices the Waelbroeck joint spectrum coincides with  both of the joint spectra we have discussed. 
\end{secRemark}
 
\subsection{The normal case}

\begin{secProp}
Let $N=(N_1,...,N_n)$ be an $n$-tuple of commuting normal operators acting 
on a Hilbert space $H$ and let $C^*(N)$ be the unital $C^*$-algebra 
generated by $N$. Then $\sigma_T(N)=\sigma_{C^*(N)} (N)$.

\end{secProp}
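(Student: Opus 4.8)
The inclusion $\sigma_T(N)\subseteq\sigma_{C^*(N)}(N)$ is immediate. Since $N_1,\dots,N_n$ are commuting normal operators, the Fuglede--Putnam theorem gives $N_iN_j^*=N_j^*N_i$, so $C^*(N)$ is a commutative unital $C^*$-subalgebra of $B(H)$ containing $N$, and Theorem \ref{ABC} applies with $\mathbf{B}=C^*(N)$. Hence the content of the proposition is the reverse inclusion $\sigma_{C^*(N)}(N)\subseteq\sigma_T(N)$, which I would prove by contraposition: assuming $\lambda=(\lambda_1,\dots,\lambda_n)\notin\sigma_T(N)$, I want to exhibit $B_1,\dots,B_n\in C^*(N)$ with $\sum_i B_i(N_i-\lambda_i)=1$, i.e.\ that $N-\lambda$ is invertible in $C^*(N)^n$.

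The plan is to read the required quasi-inverse off the exactness of the Koszul complex at its two lowest terms. If $\lambda\notin\sigma_T(N)$ then $K(N-\lambda,H)$ is exact; in particular $D^0_{N-\lambda}\colon H=\Lambda^0(H)\to\Lambda^1(H)$ is injective (exactness at $\Lambda^0$) and its range equals $\Ker D^1_{N-\lambda}$, which is closed (exactness at $\Lambda^1$). By the open mapping theorem $D^0_{N-\lambda}$ is therefore bounded below: there is $c>0$ with $\|D^0_{N-\lambda}h\|\ge c\|h\|$ for all $h\in H$. Now $D^0_{N-\lambda}h=\sum_i (N_i-\lambda_i)h\otimes e_i$, and since the $e_i$ are orthonormal, $\|D^0_{N-\lambda}h\|^2=\sum_i\|(N_i-\lambda_i)h\|^2=\langle a_\lambda h,h\rangle$, where $a_\lambda:=\sum_i (N_i-\lambda_i)^*(N_i-\lambda_i)\ge 0$. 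Thus $a_\lambda\ge c^2 I$, so $a_\lambda$ is invertible in $B(H)$.

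Finally, $a_\lambda\in C^*(N)$, and since $C^*$-subalgebras are inverse-closed (spectral permanence), $a_\lambda^{-1}\in C^*(N)$. Setting $B_i:=a_\lambda^{-1}(N_i-\lambda_i)^*\in C^*(N)$ gives $\sum_i B_i(N_i-\lambda_i)=a_\lambda^{-1}a_\lambda=1$, so $\lambda\notin\sigma_{C^*(N)}(N)$. Combined with the first paragraph this yields $\sigma_T(N)=\sigma_{C^*(N)}(N)$.

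The step I expect to be the crux is extracting ``$D^0_{N-\lambda}$ is bounded below'' from exactness of the Koszul complex: this rests precisely on exactness at $\Lambda^1$ forcing $\Ran D^0_{N-\lambda}$ to be closed, after which the open mapping theorem and positivity of $a_\lambda$ do the rest. A more symmetric alternative, closer to the literature, is to use Fuglede--Putnam to verify the identity $(D_{N-\lambda}+D_{N-\lambda}^*)^2=a_\lambda\otimes 1_\Lambda$ and then invoke the standard fact that a finite complex of Hilbert spaces is exact iff $D+D^*$ is invertible; there the only delicate point is the bookkeeping with the grading of $\Lambda$, but the outcome ($a_\lambda$ invertible in $B(H)$) is the same, and the proof concludes as above.
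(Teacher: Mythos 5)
Your proof is correct, and it takes a genuinely different route from the one in the text, although both hinge on the same reduction: $\lambda\notin\sigma_{C^*(N)}(N)$ should be equivalent to invertibility of the positive operator $a_\lambda=\sum_k(N_k-\lambda_k)^*(N_k-\lambda_k)$ (which equals $\sum_k(N_k-\lambda_k)(N_k-\lambda_k)^*$ by normality of each $N_k-\lambda_k$). The text imports from Curto the full equivalence ``$K(N-\lambda,H)$ is exact if and only if $a_\lambda$ is invertible'' and then links singularity of $a_\lambda$ to $\lambda\in\sigma_{C^*(N)}(N)$ via the spectral mapping theorem applied to $f(z)=\sum_k|z_k-\lambda_k|^2$. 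You instead extract by hand only the implication you need from the Koszul complex: exactness at $\Lambda^0$ and $\Lambda^1$ forces $D^0_{N-\lambda}$ to be injective with closed range, the open mapping theorem makes it bounded below, and the identity $\|D^0_{N-\lambda}h\|^2=\langle a_\lambda h,h\rangle$ converts that into $a_\lambda\ge c^2I$; spectral permanence then places $a_\lambda^{-1}$ in $C^*(N)$, and the explicit quasi-inverse $B_i=a_\lambda^{-1}(N_i-\lambda_i)^*$ finishes the argument. Your version is more self-contained (no external corollary, no Gelfand theory beyond inverse-closedness of unital $C^*$-subalgebras) and uses exactness only at the two lowest degrees; what it does not give you is the converse implication that Curto's corollary supplies, but that direction is covered in both proofs by Theorem \ref{ABC}. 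You are also right to flag Fuglede--Putnam explicitly: commutativity of $C^*(N)$ is needed for $\sigma_{C^*(N)}(N)$ to make sense under the definition of the algebraic joint spectrum used here, and the text leaves this implicit.
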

\begin{proof}
Let $\lambda=(\lambda_1,...,\lambda_n)\in \mathbb{C}^n$.
By \cite[Corollary 3.9]{curto1981fredholm} for it follows that the Koszul complex of $N-\lambda$ is exact if and 
only if  $\Sigma_k(N_k-\lambda_k) (N_k-\lambda_k)^*$ is invertible in $C^*(N)$, so we only need to show that 
$\lambda\in \sigma_{C^*(N)}(N)$ if and only if $\Sigma_k (N_k-\lambda_k)(N_k-\lambda_k)^*$ is singular.

So if $\lambda=(\lambda_1,...,\lambda_n)\in \sigma_{C^*(N)}(N)$ 
define $f(z_1,...,z_n)=\vert z_1-\lambda_1 \vert ^2 +... \vert z_n-\lambda_n\vert ^2$ then by the spectral mapping theorem (for several commuting normal operators) $f(N)=\Sigma_k (N_k-\lambda_k)(N_k-\lambda_k)^*$
and $\sigma(\Sigma_k (N_k-\lambda_k)(N_k-\lambda_k)^*)=f(\sigma_{C^*(N)})$
so $0=f(\lambda)\in \sigma(\Sigma_k (N_k-\lambda_k)(N_k-\lambda_k)^*)$ 
thus it is singular.

The other direction follows from Theorem \ref{ABC}.
\end{proof}
\begin{secRemark}
Let us again denote by $\textbf{A}$ the unital Banach algebra generated be $N$.
Then it is not true that $\sigma_T(N)=\sigma_{\textbf{A}}(N)$.
For example one can consider the bilateral shift $S$ acting on $l^2(\mathbb{Z})$.
It can be shown that $\sigma_{\overline{\Alg(S)}}(S)=\overline{\mathbb{D}}$ while, since $S$ is a unitary, $\sigma_{C^*(S)}(S)=\sigma_T(S)\subset \mathbb{T}$. 
\end{secRemark}

\chapter{Polynomial normal $X$-dilations}

\begin{chapDef}
\label{PD}
Let $X$ be some compact subset on the complex plane and let $T$ be an operator in $B(H)$. A  \textbf{polynomial normal $X$-dilation} of $T$ will consist of of a normal
operator $N$ acting on some Hilbert space $K$ containing $H$ such that
the spectrum of $N$ is contained in $X$ and such that
$$ B^k=P_HN^k \upharpoonright _H \ ,\ k\in \mathbb{N}.$$  
\end{chapDef}
\begin{chapRemark}
One can clearly see that a unitary dilation of an operator is simply 
a polynomial $\mathbb{T} $-dilation.
\end{chapRemark}
We are now able to define the multi-variable case.
As we shall concern ourselves only with the finite dimensional case, $H$ will always be a $d$-dimensional Hilbert
space, $T=(T_1,...T_n)$ shall be a commuting tuple of operators acting on $H$, and $X$ will be a compact subset of $\mathbb{C}^n$. We shall also denote the joint spectrum of $T$ by $ \sigma(T)$ (which is the same for both kinds of joint spectra as was seen before).

We now introduce the following definitions: 
\begin{chapDef}
Let $X$ be a subset of the complex plane and let $T=(T_1,...T_n)$ be a tuple of operators in $B(H)$.
We shall say $T$ has a \textbf{polynomial normal $X$-dilation} 
if there exists an $n$ tuple of commuting normal
operators $N=(N_1,...,N_n)$ acting on some Hilbert space $K$ containing $H$ such that
$\sigma(N)$ is contained in $X$ and such that
$$ T_1^{m_1}...T_n^{m_n}=P_HN_1^{m_1}...N_n^{m_n} | _H \ ,\ m_1,...,m_n\in \mathbb{N}.$$  
 
\end{chapDef}
We shall have a concept of spectral sets.
\begin{chapDef}
A compact subset $X$ of $\mathbb{C}^n$ will be called a \textbf{polynomial spectral set} for a tuple  $T=(T_1,...T_n)$ 
if for any polynomial $q\in \mathbb{C}[z_1,...,z_n]$ one has
$$\Vert q(T_1,...,T_n) \Vert \leq \Vert q \Vert_{\infty,X}$$
$X$ will be called a \textbf{complete polynomial spectral set} for $T$ if for any $l\in \mathbb{N}$ and any $l\times l$ matrix $Q$ with 
entries in $ \mathbb{C}[z_1,...,z_n]$, one has that $$\Vert (Q_{i,j}(T) )\Vert \leq \Vert (Q_{i,j})\Vert,$$
where the norm on the left side of the inequality is the operator norm on the direct sum  $\oplus_{i=1} ^l H$
and the norm on the right is $\sup_{z\in X}\Vert (Q_{i,j}(z_1,...,z_n))\Vert_{M_l(\mathbb{C})}.$
\end{chapDef}

A direct consequence of \cite[Theorem 1.2.2]{arveson1972subalgebras} yields us the following connection.

 \begin{chapTheorem}
 \label{T3}
Let $X\in \mathbb{C}^n $ and let $T=(T_1,...,T_n)$ be a tuple of commuting operators on a Hilbert space $H$. Then
$T$ has a polynomial normal $X$-dilation if and only if $X$ is a complete polynomial spectral set for $T$.
 \end{chapTheorem}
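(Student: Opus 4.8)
The plan is to derive Theorem \ref{T3} as a direct application of Arveson's dilation theorem (\cite[Theorem 1.2.2]{arveson1972subalgebras}) by translating the two conditions into the language of completely contractive (indeed completely isometric) unital representations of an operator algebra. First I would fix the concrete operator algebra to work with: let $\mathcal{A}$ be the closure of $\mathbb{C}[z_1,\dots,z_n]$ in $C(X)$, a unital function algebra, and let $\mathcal{A}$ carry the operator space structure it inherits from $C(X)$, so that the norm of a matrix $(Q_{i,j})\in M_l(\mathcal{A})$ is exactly $\sup_{z\in X}\lVert (Q_{i,j}(z))\rVert_{M_l(\mathbb{C})}$. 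The polynomial functional calculus $q\mapsto q(T_1,\dots,T_n)$ is well defined on polynomials; saying that $X$ is a complete polynomial spectral set for $T$ is precisely saying that this map extends to a unital completely contractive map $\rho\colon \mathcal{A}\to B(H)$ (density of polynomials gives the extension, and the matrix norm inequality is complete contractivity).

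Next I would invoke Arveson's theorem in the form: a unital completely contractive map on a unital function algebra $\mathcal{A}\subseteq C(X)$ dilates to a unital $*$-homomorphism of $C(X)$; that is, there is a Hilbert space $K\supseteq H$ and a unital $*$-representation $\pi\colon C(X)\to B(K)$ with $\rho(f)=P_H\pi(f)|_H$ for all $f\in\mathcal{A}$. Setting $N_j=\pi(z_j)$ for $j=1,\dots,n$ gives a commuting $n$-tuple of normal operators on $K$; since $\pi$ is a $*$-homomorphism of $C(X)$, the joint spectrum $\sigma(N)$ (the algebraic/Taylor spectrum, which by the Preliminaries coincides with $\sigma_{C^*(N)}(N)$) is contained in $X$, because every $z\mapsto z_j-\lambda_j$ with $\lambda\notin X$ is jointly invertible already in $C(X)$. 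Evaluating $\rho(f)=P_H\pi(f)|_H$ on monomials $f=z_1^{m_1}\cdots z_n^{m_n}$ and using $\rho(z_1^{m_1}\cdots z_n^{m_n})=T_1^{m_1}\cdots T_n^{m_n}$ gives exactly $T_1^{m_1}\cdots T_n^{m_n}=P_HN_1^{m_1}\cdots N_n^{m_n}|_H$, so $T$ has a polynomial normal $X$-dilation.

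For the converse, suppose $T$ has a polynomial normal $X$-dilation $N=(N_1,\dots,N_n)$ on $K\supseteq H$ with $\sigma(N)\subseteq X$. Since $N$ is a commuting normal tuple with joint spectrum in $X$, the multivariate continuous functional calculus yields a unital $*$-homomorphism $\pi\colon C(X)\to B(K)$ with $\pi(z_j)=N_j$, and $\lVert (Q_{i,j}(N))\rVert = \lVert\pi^{(l)}((Q_{i,j}))\rVert\le \lVert (Q_{i,j})\rVert_{M_l(C(X))}$ since $*$-homomorphisms are completely contractive (indeed complete isometries onto their range, but contractivity suffices). Now compress: the dilation identity on monomials, extended by linearity, gives $Q_{i,j}(T)=P_H Q_{i,j}(N)|_H$ for each entry, hence $(Q_{i,j}(T)) = (P_H\otimes I_l)(Q_{i,j}(N))(P_H\otimes I_l)^*$ as an operator on $\bigoplus_1^l H\subseteq\bigoplus_1^l K$, and compressions do not increase norm, so $\lVert (Q_{i,j}(T))\rVert\le \lVert (Q_{i,j}(N))\rVert\le \sup_{z\in X}\lVert (Q_{i,j}(z))\rVert$. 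Taking $l=1$ gives the spectral-set inequality and general $l$ gives completeness, so $X$ is a complete polynomial spectral set for $T$.

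The only genuine subtlety — the place where one must be slightly careful rather than merely bookkeeping — is the spectral constraint in the forward direction: one must check that the $*$-representation $\pi$ produced by Arveson's theorem actually has image contained in (a quotient of) $C(X)$ so that $\sigma(N)\subseteq X$, i.e.\ that the dilation's normal operators do not spill outside $X$. This is where the hypothesis that $\mathcal{A}$ sits inside $C(X)$ (and not just inside some abstract function algebra) is used: Arveson's theorem dilates a completely contractive map on $\mathcal{A}\subseteq C(X)$ to a representation of $C(X)$ itself, and under the functional calculus of that representation the coordinate functions map to normals whose joint spectrum lies in $\mathrm{supp}$ of the representation $\subseteq X$. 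Everything else — density of polynomials, complete contractivity of $*$-homomorphisms, norm non-increase under compression, and the identification of $\sigma(N)$ via the Preliminaries — is routine.
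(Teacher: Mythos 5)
Your proposal is correct and follows exactly the route the paper intends: the paper gives no written proof, simply declaring the theorem ``a direct consequence of \cite[Theorem 1.2.2]{arveson1972subalgebras},'' and your argument is precisely the expansion of that citation (complete contractivity of the polynomial calculus on the closure of $\mathbb{C}[z_1,\dots,z_n]$ in $C(X)$, Arveson's dilation to a $*$-representation of $C(X)$, and the standard compression estimate for the converse). The details you supply, including the well-definedness of $q\mapsto q(T)$ on the function algebra and the localization of $\sigma(N)$ inside $X$, are the right ones.
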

 
 The following definition shall help us introduce a connection between spectral sets and polynomial spectral sets.
   
\begin{chapDef}
Let $X\subset \mathbb{C}^n$ be compact. The \textbf{polynomially convex hull} of $X$, denoted by $\widehat{X}$, is defined as 
$$\widehat{X}=\{ z\in \mathbb{C}^n : |p(z)|\leq \max_{ \xi\in X} |p(\xi )| \text{ for all multivariate polynomials} \} .$$
$X$ will be called \textbf{polynomially convex} if $X=\widehat{X}$.
\end{chapDef} 

\begin{Ex}
\label{Q1}
Any finite set $X$ is polynomially convex.

\end{Ex}
\begin{proof}
Let $X=\{ w_1,...,w_k\}\subset \mathbb{C}^n$ where $w_i=(w_i^{(1)},...,w_i^{(n)})$.
It is easy to see that $X\subset \widehat{X}$ as for the other direction assume otherwise, then there exist a point $w_0 \in \widehat{X}\setminus X$. For each $i\in\{1,...,k \} $ there exist a $j_i$ such that $w_0^{(j_i)}\neq w_i^{(j_i)}$. Now consider the polynomial 
$$p(z_1,...,z_n)=\prod_i (z_{j_i}-w_i^{(j_i)}).$$
Note that $p(w_0)\neq 0 $ and that for all $i\in \{1,...,k\}$, $p(w_i)=0$ and therefore 
$$|p(w_0)|> \max_{\xi \in X} |p(\xi)|$$ thus $w_o$ is not in $\widehat{X}$, a contradiction.

\end{proof}
\begin{chapRemark}
\label{Q2}
The polynomially convex hull of a compact set is compact.
\end{chapRemark}
\begin{proof}
For each multivariate polynomial $p$ define $$F_p=\{ z\in \mathbb{C}^n : |p(z)|\leq \max_{ \xi\in X} |p(\xi )| \}.$$
We notice that $$\widehat{X}=\bigcap_{p}F_p.$$ Since each $F_p$ is a closed set so is $\widehat{X}$.
In order to see $\widehat{X}$ is bonded consider the polynomials $$q_i=(z_1,...,z_n)=z_i,\ i\in\{1,...,n\}.$$ Let $$r_i=\max_{\xi\in X} |q_i(\xi)|$$
and set $$r= \sqrt{r_1^2+...+r_n^2}.$$ Then $$\widehat{X}\subset \bigcap_{i=1}^n F_{q_i} \subset B(r)$$
 thus $\widehat{X}$ is compact.
\end{proof}
\begin{chapRemark}
\label{Q3}
For any $z \notin \widehat{X}$ there exist a polynomial $p_z$ such that
$$|p_z(z)|>\max_{ \xi\in X} |p_z(\xi )|$$
\end{chapRemark}
 \begin{chapProp}
 \label{Q4}
Let $T=(T_1,...,T_n)$ be a tuple of commuting bounded operators in $B(H)$.
Let $X\subset \mathbb{C}^n$ be a polynomial spectral set for $T$. Then 
$\sigma(T) \subset \widehat{X}$.
 \end{chapProp}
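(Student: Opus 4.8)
The plan is to argue by contraposition on the polynomial convexity characterization. Suppose $\lambda = (\lambda_1,\dots,\lambda_n) \in \sigma(T)$ but $\lambda \notin \widehat{X}$. By Remark \ref{Q3} there is a polynomial $p_\lambda$ with $|p_\lambda(\lambda)| > \max_{\xi \in X} |p_\lambda(\xi)| = \|p_\lambda\|_{\infty,X}$. The idea is to feed $p_\lambda$ into the spectral set inequality and derive a contradiction, using the fact that membership of $\lambda$ in the joint spectrum forces $p_\lambda(\lambda)$ into the (ordinary, one-variable) spectrum of the single operator $p_\lambda(T)$.

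The key steps, in order, are as follows. First, recall that $\sigma(T)$ denotes the joint spectrum (algebraic = Taylor, by the earlier propositions), and that it satisfies the polynomial spectral mapping property: for any polynomial $q$, $\sigma(q(T)) = q(\sigma(T))$; equivalently, the Waelbroeck description gives directly that $q(\lambda) \in \sigma(q(T))$ whenever $\lambda \in \sigma(T)$. Apply this with $q = p_\lambda$ to conclude $p_\lambda(\lambda) \in \sigma(p_\lambda(T))$, hence $\|p_\lambda(T)\| \geq r(p_\lambda(T)) \geq |p_\lambda(\lambda)|$, where $r(\cdot)$ is the spectral radius. Second, since $X$ is a polynomial spectral set for $T$, applying the defining inequality to the single polynomial $p_\lambda$ gives $\|p_\lambda(T)\| \leq \|p_\lambda\|_{\infty,X} = \max_{\xi \in X}|p_\lambda(\xi)|$. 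Chaining the two inequalities yields
$$|p_\lambda(\lambda)| \leq \|p_\lambda(T)\| \leq \max_{\xi \in X} |p_\lambda(\xi)|,$$
which directly contradicts the choice of $p_\lambda$ from Remark \ref{Q3}. Therefore no such $\lambda$ exists, i.e. $\sigma(T) \subset \widehat{X}$.

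The only genuinely nontrivial ingredient is the spectral mapping step $p_\lambda(\lambda) \in \sigma(p_\lambda(T))$ for $\lambda$ in the joint spectrum; in the finite-dimensional setting this is elementary since, after simultaneous upper-triangularization (as recalled before Proposition \ref{MM}), the diagonal entries of $p_\lambda(T_1,\dots,T_n)$ are exactly $p_\lambda(\lambda_k^{(1)},\dots,\lambda_k^{(n)})$ over the points $\lambda_k$ of the joint spectrum, so the spectrum of $p_\lambda(T)$ is precisely $\{p_\lambda(\lambda_k)\}_k = p_\lambda(\sigma(T))$. Alternatively one invokes the Waelbroeck spectrum definition, which packages this property by fiat. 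Everything else is a one-line application of the spectral set hypothesis and of $\|A\| \geq r(A)$, so I expect no real obstacle; the main thing to be careful about is keeping the several notions of joint spectrum aligned, which the preceding section has already established to coincide in our case.
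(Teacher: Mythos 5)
Your proof is correct and follows essentially the same route as the paper's: choose a separating polynomial $p_\lambda$ via Remark \ref{Q3}, use the spectral mapping property to place $p_\lambda(\lambda)$ in $\sigma(p_\lambda(T))$, and contradict the polynomial spectral set inequality. Your extra discussion justifying the spectral mapping step is a welcome addition the paper leaves implicit, but the argument is the same.
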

\begin{proof}
Assume otherwise, then there exist $\lambda \in \sigma(T) \smallsetminus \widehat{X}$. By \ref{Q3} we have some polynomial $p_{\lambda}$ such that 
$|p_{\lambda}(\lambda)|>\max_{\xi\in X}|p_{\lambda}(\xi)|$.  Since $\lambda\in \sigma(T)$
we have that $p_{\lambda}(\lambda)\in \sigma (p_{\lambda}(T))$ and thus
$$\Vert p_{\lambda}(T) \Vert \geq | p_{\lambda}(\lambda) | > \max_{\xi\in X}|p_{\lambda}(\xi)| .$$
This is a contradiction for we assumed $X$ is a polynomial spectral set for $T$.
\end{proof}

 \chapter{Normal $X-m-$dilations}
In this chapter we shall focus on dilations of operators acting on finite dimensional 
spaces. We will introduce a different kind of dilation that may have
some advantage in this particular setting and give connections to polynomial normal $\partial X$ dilations. For convenience we shall refer to polynomial normal $X$-dilations simply as normal $X$ dilations.
We begin this discussion with the following well known observation.

\begin{chapLemma}
Let $B$ in $M_d(\mathbb{C})$ and let $X\subset \mathbb{C}$ be a polynomial spectral set for $B$.
If $X$ is finite then $B$ is normal. 
\end{chapLemma}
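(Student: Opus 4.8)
The plan is to first show that $B$ is diagonalizable with $\sigma(B)\subseteq X$, then that the spectral idempotents attached to its eigenspaces are orthogonal projections, and finally to read off normality from the resulting spectral decomposition.

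First I would write $X=\{w_1,\dots,w_k\}$ (distinct points) and consider the polynomial $p(z)=\prod_{i=1}^{k}(z-w_i)$. Since $p$ vanishes on $X$ we have $\Vert p\Vert_{\infty,X}=0$, so the polynomial spectral set hypothesis forces $p(B)=0$. As $p$ has simple roots, the minimal polynomial of $B$ has simple roots, hence $B$ is diagonalizable; moreover $\sigma(B)\subseteq X$ (this also follows from Proposition \ref{Q4} together with Example \ref{Q1}, a finite set being polynomially convex). Let $w_{i_1},\dots,w_{i_s}$ be the distinct eigenvalues of $B$, put $E_j=\ker(B-w_{i_j}I)$, so that $H=\bigoplus_{j=1}^{s}E_j$ as a (not a priori orthogonal) direct sum, and let $P_j$ be the idempotent with range $E_j$ and kernel $\bigoplus_{l\neq j}E_l$.

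Next I would bound $\Vert P_j\Vert$. By Lagrange interpolation choose a polynomial $q_j$ with $q_j(w_{i_j})=1$ and $q_j(w)=0$ for every other point $w\in X$ (including those points of $X$ that are not eigenvalues of $B$). Evaluating on each eigenspace shows $q_j(B)=P_j$, while by construction $\Vert q_j\Vert_{\infty,X}=1$; hence $\Vert P_j\Vert=\Vert q_j(B)\Vert\le 1$. Now I invoke the elementary fact that an idempotent $P$ on a Hilbert space with $\Vert P\Vert\le 1$ is an orthogonal projection: for $u\in\Ran P$ and $v\in\ker P$ one has $\Vert u\Vert^{2}=\Vert P(u+tv)\Vert^{2}\le\Vert u+tv\Vert^{2}$ for every scalar $t$, which forces $\langle u,v\rangle=0$, so $\ker P\perp\Ran P$ and $P=P^{*}$. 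Thus each $P_j$ is an orthogonal projection.

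Finally, since $P_jP_l=0$ for $j\neq l$ and $\sum_{j}P_j=I$, the eigenspaces $E_j$ are mutually orthogonal, and $B=\sum_{j}w_{i_j}P_j$ is a linear combination of scalars times mutually orthogonal projections. Then $B^{*}=\sum_{j}\overline{w_{i_j}}\,P_j$ and $BB^{*}=\sum_{j}|w_{i_j}|^{2}P_j=B^{*}B$, so $B$ is normal. I do not expect a genuine obstacle here; the only steps needing care are the identification $q_j(B)=P_j$, which is immediate once diagonalizability is established, and the standard lemma on norm-one idempotents.
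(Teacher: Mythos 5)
Your proof is correct, and its skeleton matches the paper's: both establish $\sigma(B)\subseteq X$ and diagonalizability via the vanishing polynomial $\prod_{x\in X}(z-x)$, both introduce the Lagrange interpolation polynomials that are $1$ at one point of $X$ and $0$ at the others, and both finish by writing $B$ as a weighted sum of orthogonal projections. The one genuine difference is how the middle step is handled. The paper first proves the special case that a matrix with $\{0,1\}$ as a spectral set is an orthogonal projection (by an ad hoc argument rescaling eigenvectors $v_0,v_1$ to contradict $\Vert B\Vert\le 1$), and then applies this to each $p_j(B)$, using that $p_j(X)\subseteq\{0,1\}$ is a spectral set for $p_j(B)$. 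You instead bound $\Vert P_j\Vert=\Vert q_j(B)\Vert\le\Vert q_j\Vert_{\infty,X}=1$ directly and invoke the standard lemma that a contractive idempotent on a Hilbert space is self-adjoint. These are really the same fact in different packaging --- the paper's $\{0,1\}$ argument is a disguised proof of that lemma --- but your version is cleaner: it avoids the somewhat delicate eigenvector-rescaling computation and the case analysis on whether $\sigma(B)$ is a singleton, at the cost of quoting (and proving) the norm-one idempotent lemma. Both routes use only the scalar polynomial spectral set hypothesis, so neither is more general than the other.
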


\begin{proof}
We first note that since $X$ is a finite set
we have that $X=\widehat{X}$ and since $X$ is a
 polynomial spectral set for $B$ we get from proposition \ref{Q4} that $\sigma(B) \subset X$.
We now turn to show that $B$ is diagonalizable. Indeed, if it not diagonalizable then there exists a $\lambda \in \sigma(B)$
such that $(z- \lambda )^2$ divides $p_m(z)$ - the minimal polynomial of $B$.
Define $$q(z)=\prod_{x\in X} (z-x)$$ Then $q(z)$ is not in the ideal generated by $p_m(z)$ and we have
$$0< \Vert q(B) \Vert \leq \Vert q \Vert _{\infty , X} = 0,$$
which is a contradiction.
 
Now assume $X \subset \{0,1\} $. Then $\sigma(B)\subset \{0,1\}$,
 we shall show $B$ is a projection.
In the case $\sigma(B)$ consists of one point, since $B$ is diagonalizable, we have that $B$ is either the zero matrix or the identity.
Thus we assume $X=\sigma(B)= \{ 0,1 \} $. 

Let $v_0,v_1$ be two eigenvectors of eigenvalues $0$ and $1$, respectively. 
It will suffice to show that $\langle v_0,v_1 \rangle=0$.
Assume this is not the case and note that we can assume $\langle v_0,v_1 \rangle<0$ (by replacing $v_0$ with $\alpha v_0$
for an appropriate $\alpha$). Moreover, by scaling down $v_0$ we can arrange $\Vert v_0 \Vert ^2 +2\langle v_0,v_1 \rangle$
is strictly negative and we have:
$$\Vert v_0+v_1 \Vert ^2 = \Vert v_0\Vert ^2 +2\langle v_0,v_1 \rangle  + \Vert v_1 \Vert ^2 < \Vert v_1 \Vert ^2 = 
\Vert B(v_0 + v_1) \Vert ^2$$
Thus $\Vert B \Vert > 1$, but $\{ 0,1 \}$ is a spectral set for $B$ so $\Vert B \Vert \leq \max_{\{0,1 \}} \vert z \vert =1$
, a contradiction.

As for the general case let $B \in M_d(\mathbb{C}) $ with $X$ as its spectral set and let 
$\sigma(B)=\{\lambda_1,...,\lambda_k \}$,
and recall that $\sigma(B)\subset X$ . For each
$\lambda_j \in \sigma(B)$ we define the polynomial
$$p_j(z)=\prod_{x\in X\setminus \{\lambda_j \} } \dfrac{(z-x)}{(\lambda_j - x)}$$
Then $p_j(x)=\delta_{x,\lambda_j}$ on $X$. Note that $p_j(X) \subset \{0,1\}$ is a spectral set for $p_j(B)$, thus by the
preceding $p_j(B)$ is an orthogonal projection.
In addition, we have that $p_i(z)p_j(z) \equiv 0$ on $\sigma(B)$ for $i \neq j$, therefore $p_i(B)p_j(B)$ must be the zero
matrix. To conclude we note that $B=\sum_i \lambda_ip_i(B)$, a weighted 
sum of orthogonal projections, therefore $B$ is normal.

%$$BB^* = (\sum_i \lambda_ip_i(B))(\sum_i \lambda_ip_i(B))= \sum_i \vert \lambda_i \vert ^2 p_i(B) = B^*B$$ 
\end{proof}

\begin{chapCor}
If $B=(B_1,...,B_n)$ is a commuting tuple of operators acting on a finite dimensional Hilbert space $H$ which has a normal $X$ dilation $N=(N_1,...,N_n)$ acting on some finite dimensional Hilbert space $K$. Then $B_j$ is normal for $j=1,...,n$.
\end{chapCor}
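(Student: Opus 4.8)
The plan is to reduce the multivariate statement to the single-variable lemma just proved, exactly as the lemma's own proof reduced the general spectral-set case to the two-point case. First I would fix $j \in \{1,\dots,n\}$ and aim to show that $X$ (or rather a suitable image of it) is a polynomial spectral set for $B_j$ alone, so that a one-variable argument can be applied. Since $N = (N_1,\dots,N_n)$ is a commuting tuple of normal operators on the finite-dimensional space $K$ with joint spectrum contained in $X$, each coordinate $N_j$ is a normal operator whose (ordinary, single-operator) spectrum is $\pi_j(\sigma(N))$, where $\pi_j : \mathbb{C}^n \to \mathbb{C}$ is the $j$-th coordinate projection; in particular $\sigma(N_j) \subseteq \pi_j(X) =: X_j$, a compact subset of $\mathbb{C}$.

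The key step is the compression identity. For any single-variable polynomial $p \in \mathbb{C}[z]$, applying the dilation relation $T_1^{m_1}\cdots T_n^{m_n} = P_H N_1^{m_1}\cdots N_n^{m_n}|_H$ with all exponents except the $j$-th equal to zero gives $B_j^k = P_H N_j^k |_H$ for every $k \in \mathbb{N}$, hence $p(B_j) = P_H\, p(N_j)\, |_H$ by linearity. Because $N_j$ is normal with $\sigma(N_j) \subseteq X_j$, the continuous (here just polynomial) functional calculus yields $\|p(N_j)\| \le \|p\|_{\infty, X_j}$, and compressing to $H$ only decreases the norm, so
$$\|p(B_j)\| \le \|p(N_j)\| \le \|p\|_{\infty, X_j}.$$
Thus $X_j$ is a polynomial spectral set for the single operator $B_j \in M_d(\mathbb{C})$.

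Now I would like to invoke the preceding Lemma to conclude $B_j$ is normal, but that Lemma is stated for \emph{finite} $X$, whereas $X_j$ need only be compact. There are two ways around this, and the cleaner one is: since $\sigma(B_j)$ is a finite set, replace $X_j$ by the finite set $X_j' := \sigma(B_j) \subseteq X_j$; for a polynomial $p$ vanishing on $\sigma(B_j)$ one has $p(B_j)$ nilpotent, and running the diagonalizability argument of the Lemma's proof (the step using $q(z) = \prod_{x \in X'_j}(z-x)$ against the minimal polynomial) shows $B_j$ is diagonalizable, after which the projection-decomposition argument $B_j = \sum_i \lambda_i p_i(B_j)$ with the $p_i(B_j)$ mutually orthogonal projections goes through verbatim with $X$ replaced by the finite set $X'_j$. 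Alternatively, one observes that a polynomial spectral set for $B_j$ that is merely compact still forces $\sigma(B_j) \subseteq \widehat{X_j}$ by Proposition \ref{Q4}, and the Lemma's argument never really used finiteness of $X$ beyond having $\sigma(B_j)$ finite and the interpolating polynomials $p_j$ available on $\sigma(B_j)$. Either way the conclusion is that each $B_j$ is normal, which is what was to be shown.

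The only genuine obstacle is this mismatch between the hypothesis ``$X$ finite'' in the cited Lemma and the ``$X$ compact'' situation here; everything else is a routine unwinding of the dilation relation coordinate by coordinate together with the $C^*$-norm bound for the functional calculus of a single normal operator. I expect the write-up to spend one line on the compression inequality and a sentence reducing to $\sigma(B_j)$ so that the Lemma applies.
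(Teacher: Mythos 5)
Your reduction to the one-variable lemma via the compression identity $B_j^k=P_HN_j^k|_H$ is the right strategy and is exactly what the paper does, but you have misidentified the finite set to which the lemma should be applied, and both of your proposed workarounds for the ``compact versus finite'' mismatch fail. The point you are missing is that the hypothesis that $K$ is \emph{finite dimensional} is there precisely so that $\sigma(N_j)$ is a \emph{finite} set. Since $N_j$ is normal, the functional calculus gives the equality $\Vert p(N_j)\Vert = \Vert p\Vert_{\infty,\sigma(N_j)}$ (not merely an inequality over the larger set $\pi_j(X)$), so $\Vert p(B_j)\Vert \leq \Vert p(N_j)\Vert = \Vert p\Vert_{\infty,\sigma(N_j)}$ exhibits the finite set $\sigma(N_j)$ as a polynomial spectral set for $B_j$, and the lemma applies verbatim. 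That is the paper's entire proof; no modification of the lemma is needed.

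Your first workaround --- replacing $X_j$ by $\sigma(B_j)$ --- is not justified: the spectral-set inequality $\Vert p(B_j)\Vert \leq \Vert p\Vert_{\infty,X_j}$ does \emph{not} imply $\Vert p(B_j)\Vert \leq \Vert p\Vert_{\infty,\sigma(B_j)}$, since shrinking the set only decreases the right-hand side (indeed, for a nonzero nilpotent contraction $B$ the singleton $\sigma(B)=\{0\}$ is not a spectral set, so ``the spectrum is always a spectral set'' is false in general). Without $\sigma(B_j)$ being a spectral set, the diagonalizability step (which needs $\Vert q(B_j)\Vert \leq \Vert q\Vert_{\infty,X'_j}=0$ for $q(z)=\prod_{x\in X'_j}(z-x)$) does not run. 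Your second workaround --- the claim that the lemma never really uses finiteness of $X$ --- is also wrong: the lemma is simply false for compact $X$, as $\overline{\mathbb{D}}$ is a polynomial spectral set for every contraction by von Neumann's inequality, and contractions need not be normal. Finiteness of the spectral set is used essentially both to form the annihilating polynomial $\prod_{x\in X}(z-x)$ and to ensure the interpolating polynomials satisfy $p_i(X)\subseteq\{0,1\}$ on the whole spectral set, not just on $\sigma(B_j)$.
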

\begin{proof}
We have that $N_j$ is a normal $ \sigma(N)$ dilation for $B_j$ thus $\sigma(N)$ is a
polynomial spectral set for $B_j$.
Since $N_j$ acts on a finite dimensional space, $\sigma(N_j)$ finite, thus in view of the previous lemma we obtain that $B_j$ is normal.  
\end{proof}

In view of the last corollary it turns out that in order to try to understand a tuple of commuting matrices through their dilation
one needs to invoke operator theory on infinite dimensional spaces.
In order to try to stay within the realm of finite dimensional linear algebra we  introduce the following definition of a dilation.

\begin{chapDef}
Let $X\subset \mathbb{C}^n$ be compact and let $m\in \mathbb{N}$.
Let $B=(B_1,..,B_n)$ be a commuting tuple of operators acting on a Hilbert space $H$.
A \textbf{normal $ X-m-$dilation} for $B$ is an 
$n$ tuple of commuting normal operators $N=(N_1,...,N_n)$ acting on a Hilbert space $K$ containing $H$ such that
 $\sigma(N)\subset  X$ and such that $$B_1^{m_1}...B_n^{m_n}=P_H N_1^{m_1}...N_n^{m_n} \upharpoonright _H$$
 for all non-negative integers $m_1,...,m_n$ satisfying $m_1+...,+m_n \leq m$.   

\end{chapDef}

\begin{chapRemark}
When taking $X=\mathbb{T}^n$ we have that a normal $X-m-$dilation is simply 
 a unitary $m$-dilation which was presented in the introduction.

\end{chapRemark}
We now state our main result.
\begin{chapTheorem}
\label{AAS}
Let $T=(T_1,...,T_n)$ be a tuple of commuting operators acting of a Hilbert space $H$ of dimension $d$.
Then the following are equivalent
\begin{enumerate}
\item
$T$ has a normal $X$ dilation. 
\item
For any positive integer $m$,
$T$ has a normal $X-m-$dilation acting on a finite dimensional space whose dimension depends on $d$.
\end{enumerate}
\end{chapTheorem}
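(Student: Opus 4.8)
The plan is to prove the two implications separately: $(2)\Rightarrow(1)$ is short and reduces to Theorem \ref{T3}, while $(1)\Rightarrow(2)$ carries the main content and is built on an operator-valued cubature formula followed by Naimark's dilation theorem.

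For $(2)\Rightarrow(1)$, by Theorem \ref{T3} it suffices to show that $X$ is a complete polynomial spectral set for $T$. Let $Q=(Q_{ij})$ be an $l\times l$ matrix over $\mathbb{C}[z_1,\dots,z_n]$ and set $m=\max_{i,j}\deg Q_{ij}$; by hypothesis $T$ has a normal $X$-$m$-dilation $N$ on a Hilbert space $K\supseteq H$ with $\sigma(N)\subseteq X$. Since each $Q_{ij}$ is a linear combination of monomials $z^\alpha$ with $|\alpha|\le m$ and the map $A\mapsto P_HA\upharpoonright_H$ is linear, the $m$-dilation property gives $Q_{ij}(T)=P_HQ_{ij}(N)\upharpoonright_H$ for all $i,j$; hence $(Q_{ij}(T))$ is the compression of $(Q_{ij}(N))$ to $\bigoplus_{i=1}^l H\subseteq\bigoplus_{i=1}^l K$, so $\|(Q_{ij}(T))\|\le\|(Q_{ij}(N))\|$. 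As $N$ is a commuting tuple of normal operators on a finite-dimensional space it can be simultaneously diagonalized, so $(Q_{ij}(N))$ is unitarily equivalent to a block-diagonal operator with blocks $(Q_{ij}(z))_{i,j}\in M_l(\mathbb{C})$, $z\in\sigma(N)$, whence $\|(Q_{ij}(N))\|=\max_{z\in\sigma(N)}\|(Q_{ij}(z))\|_{M_l(\mathbb{C})}\le\sup_{z\in X}\|(Q_{ij}(z))\|_{M_l(\mathbb{C})}$. Combining the two bounds shows $X$ is a complete polynomial spectral set for $T$, and Theorem \ref{T3} finishes this direction.

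For $(1)\Rightarrow(2)$, let $N=(N_1,\dots,N_n)$ be a normal $X$-dilation of $T$ on $K\supseteq H$, with joint spectral measure $E$ supported on $\sigma(N)\subseteq X$, and set $F(\cdot)=P_HE(\cdot)\upharpoonright_H$, a $B(H)$-valued POVM on $X$ with $F(X)=I_H$. By the spectral theorem, $\int z^\alpha\,dF=P_HN_1^{\alpha_1}\cdots N_n^{\alpha_n}\upharpoonright_H=T_1^{\alpha_1}\cdots T_n^{\alpha_n}$ for every multi-index $\alpha$. Fix $m$ and let $V\subseteq C(X)$ be the finite-dimensional space spanned by $1$ and the monomials $z^\alpha$, $|\alpha|\le m$. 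The key step is a cubature formula for $F$ relative to $V$: points $x_1,\dots,x_K\in X$ and positive operators $F_1,\dots,F_K\in B(H)$ with $\sum_iF_i=I_H$, with $\int f\,dF=\sum_if(x_i)F_i$ for all $f\in V$, and with $K$ bounded in terms of $d$, $m$, $n$ only. I would obtain this by scalarizing: let $\mu=\operatorname{tr}F(\cdot)$, a finite positive measure on $X$, write $dF=G\,d\mu$ with $G$ measurable into the positive trace-one operators, so $\int f\,dF=\int f(x)G(x)\,d\mu(x)$, and apply a Tchakaloff-type cubature theorem in its measurable form (Bayer--Teichmann) to the real and imaginary parts of the matrix entries of the functions $x\mapsto f(x)G(x)$, $f\in V$. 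This yields nodes $x_i$ in the support of $\mu$ and weights $w_i>0$ with $\int fG\,d\mu=\sum_iw_if(x_i)G(x_i)$ for all $f\in V$; put $F_i=w_iG(x_i)$ and use $1\in V$ to get $\sum_iF_i=I_H$. The node count is at most the real dimension of this function space, i.e.\ $O\!\big(d^2\binom{n+m}{m}\big)$.

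Granting the cubature formula, $\{F_i\}_{i=1}^K$ is a POVM supported on the finite set $\{x_1,\dots,x_K\}$, so by Naimark's dilation theorem there are a finite-dimensional $K'\supseteq H$ and mutually orthogonal projections $P_1,\dots,P_K$ on $K'$ with $\sum_iP_i=I_{K'}$ and $F_i=P_HP_i\upharpoonright_H$; a minimal dilation has $\dim K'=\sum_i\operatorname{rank}(F_i)\le Kd$, still bounded by a function of $d$, $m$, $n$. Defining $\widetilde N_j=\sum_{i=1}^K(x_i)_jP_i$ for $j=1,\dots,n$, the $\widetilde N_j$ are commuting normal operators with joint spectrum $\{x_1,\dots,x_K\}\subseteq X$, and for $|\alpha|\le m$,
$$P_H\widetilde N_1^{\alpha_1}\cdots\widetilde N_n^{\alpha_n}\upharpoonright_H=\sum_ix_i^\alpha\,P_HP_i\upharpoonright_H=\sum_ix_i^\alpha F_i=\int z^\alpha\,dF=T_1^{\alpha_1}\cdots T_n^{\alpha_n}.$$
Thus $\widetilde N$ is a normal $X$-$m$-dilation of $T$ on a space whose dimension is bounded by a function of $d$ (with $m$, $n$ fixed), which is statement (2). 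The main obstacle is the cubature step: one needs the cubature nodes to lie in $X$ and the node count to be dimension-controlled, and the naive convexity argument (barycenter lying in a convex hull, Carathéodory) stumbles on the fact that $G$ is only measurable, so the measurable version of Tchakaloff's theorem is essential.
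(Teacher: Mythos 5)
Your proof is correct and follows the same overall architecture as the paper's (compress the joint spectral measure of the $X$-dilation to a $B(H)$-valued POVM, find a finitely supported cubature formula exact on polynomials of degree at most $m$, dilate the resulting discrete POVM by Naimark, and assemble the normal tuple $\widetilde N_j=\sum_i (x_i)_j P_i$; in the converse direction, compress an $l\times l$ polynomial matrix through an $X$-$m$-dilation to show $X$ is a complete polynomial spectral set and invoke Theorem \ref{T3}). The one place where you genuinely diverge is the cubature step. The paper proves its own operator-valued Tchakaloff theorem (Theorem \ref{W1}, modeled on Putinar's argument): it works directly in $M_d^+(\mathbb{C})\times K$ with the set $C=\{(T(A),f_1(x)T(A),\dots,f_k(x)T(A))\}$, shows $\conv(C)$ is closed via Carath\'eodory together with the bound $\|A_{j_0}\|\le\|\sum_j A_j\|$ for positive summands, and exhibits the moment vector as a limit of operator-valued Riemann sums, each of which already lies in $\conv(C)$ --- no density with respect to a scalar measure ever appears. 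You instead scalarize: take $\mu=\operatorname{tr}F(\cdot)$, write $dF=G\,d\mu$ by Radon--Nikodym, and apply the measurable (Bayer--Teichmann) form of Tchakaloff's theorem to the entries of $x\mapsto f(x)G(x)$. This is sound, but only because you invoke the fully general measurable version, which produces nodes $x_i$ in the underlying set at which the \emph{chosen representative} of $G$ is evaluated; the naive route through the pushforward measure would only place nodes in the essential range, which need not be attained. You correctly flag this, and your fix works provided $G$ is fixed as an everywhere positive semidefinite trace-one representative. The trade-off: your route is shorter because it outsources the hard convexity argument to a citation, while the paper's route is self-contained, avoids Radon--Nikodym entirely, and yields the pairs $(A_j,w_j)$ in one step rather than as $w_iG(x_i)$. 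Both give node counts, and hence dilation-space dimensions, bounded in terms of $d$, $m$, $n$ only, as required by statement (2).
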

Before we proceed let us introduce some notation.

\begin{chapDef}
Let $X$ be a compact Hausdorff space and let $\mathcal{B}$ be the Borel
$\sigma$-algebra of $X$.
A positive operator valued measure (POVM) on $X$ is a map $\mu : \mathcal{B} \rightarrow B(H)$
that satisfies the following:
\begin{enumerate}
\item

For every countable collection of disjoint Borel sets 
$\lbrace B_i \rbrace_{i\in \mathbb{N}}$ with union $B$ we have
\begin{center}
$\langle \mu (B)x,y \rangle =\sum_i \langle \mu (B_i)x,y \rangle $, 
for all $x,y\in H.$
\end{center}
\item
$$\sup \lbrace \Vert \mu(B) \Vert : B\in \mathcal{B} \rbrace < \infty.$$
\item 
For all $x,y\in H$ we have that the complex measure given by $\mu_{x,y}(B)=\langle \mu(B)x,y\rangle$ is regular.
\item 

$\mu (B)$ is a positive operator for all $B\in \mathcal{B}$. 

\end{enumerate}
\end{chapDef}
Given a POVM, one obtains a bounded, linear map
$$\phi_{\mu}: C(X) \rightarrow B(H) $$ by 
$$\langle\phi_{\mu}(f)x,y\rangle= \int f d \mu_{x,y}.$$ 
Note that by condition 4 it follows that $\phi_{\mu}$ is a positive map.
Conversely, given a bounded positive map $\phi :C(X) \rightarrow B(H)$, then if we define the regular Borel measures $\{ \mu_{x,y} \}$ for each $x$ and $y$ in $H$ 
by the formula above, then for each Borel set $B$, there exists a unique, bounded positive operator $\mu(B)$, defined by $$\langle\phi_{\mu}(f)x,y\rangle= \int f d \mu_{x,y},$$  and the map $B \rightarrow \mu(B)$ is a POVM. Thus we obtain a one-to-one correspondence between the positive maps from $C(X)$ into $B(H)$ and POVM
(see \cite[Chapter 4, Ex. 4.10]{paulsen2002completely} for more details).
\begin{chapTheorem}
 \label{W1}
Let $\mu$ be a positive semi-definite matrix valued measure on $M_d(\mathbb{C})$ with a compact support $K\subset \mathbb{C}^n$ such that $\mu (K)$ is the identity and let $f_1,f_2,...f_k\in C(K)$. Then there exists  $M\in \mathbb{N}$, $w_1,w_2,...w_M$ in $K$ and positive semi-definite matrices $A_1,A_2,...,A_M$ in $M_d (\mathbb{C})$ such that
$\Sigma_jA_j=I_d$ and such that for any $f\in span\{f_1,...,f_k\}$ $$\int f d\mu=A_1f(w_1)+A_2f(w_2)+...+A_Mf(w_M)$$ holds. 
\end{chapTheorem}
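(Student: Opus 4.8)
The plan is to realize this as a cubature (quadrature) statement and prove it by a dimension-counting / convexity argument — essentially a matrix-valued Tchakaloff-type theorem. Consider the real vector space $W\subset C(K)$ spanned by $f_1,\dots,f_k$ together with the constant function $1$; we may assume $1\in W$ and that $W$ is closed under the operations needed (we actually want to test against the $2$-sided complexification, but since we integrate matrix-valued objects against scalar functions, it is cleanest to work with the real-linear functional $\ell:W\to M_d(\mathbb{C})$ given by $\ell(f)=\int f\,d\mu$, viewing $M_d(\mathbb{C})$ as a finite-dimensional real vector space of dimension $2d^2$). The target object is a representation of $\ell$ as a finite positive combination $\sum_j A_j\,\mathrm{ev}_{w_j}$ with $A_j\succeq 0$, $\sum_j A_j=I_d$, which is exactly the assertion.

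The key steps, in order: (1) Reformulate as a moment problem. Define, for each point $w\in K$, the linear map $\Phi_w: f\mapsto f(w)\cdot C$ as $C$ ranges over positive semidefinite matrices — more precisely, consider the set $S=\{\,(f\mapsto C f(w)) : w\in K,\ C\in M_d(\mathbb{C})_+,\ \mathrm{tr}(C)\le \text{(some bound)}\,\}$ of ``atomic'' maps $W\to M_d(\mathbb{C})$, sitting inside the finite-dimensional real space $\mathrm{Hom}_{\mathbb{R}}(W,M_d(\mathbb{C}))$. (2) Observe that $\ell$ lies in the closed convex cone generated by $S$: indeed, $\ell(f)=\int_K f(w)\,d\mu(w)$ is a limit of Riemann-type sums $\sum_i f(w_i)\,\mu(E_i)$ over Borel partitions $\{E_i\}$ of $K$, and each $\mu(E_i)$ is positive semidefinite by hypothesis, so each such sum is a finite nonnegative combination of elements of $S$; compactness of $K$ and regularity of $\mu$ make these sums converge to $\ell$. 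Hence $\ell\in\overline{\mathrm{cone}}(S)$. (3) Upgrade ``in the closure'' to ``in the cone itself.'' Here I would use that $S$ is (essentially, after normalizing by trace) a compact set: $K$ is compact, and the PSD matrices of bounded trace form a compact set; the map $(w,C)\mapsto (f\mapsto Cf(w))$ is continuous, so its image is compact, and therefore the cone it generates is closed (the cone generated by a compact set not containing $0$, or a compact convex set, is closed). Actually the cleanest route: the convex hull of $S\cup\{0\}$ is compact, so $\mathrm{cone}(S)$ is closed, giving $\ell\in\mathrm{cone}(S)$ outright. (4) Apply Carathéodory's theorem in the finite-dimensional real vector space $\mathrm{Hom}_{\mathbb{R}}(W,M_d(\mathbb{C}))$ (dimension $\le 2d^2(k+1)$): since $\ell$ is a nonnegative combination of points of $S$, it is a nonnegative combination of at most $N:=\dim+1$ of them, say $\ell=\sum_{j=1}^{M} c_j\,(f\mapsto C_jf(w_j))$ with $c_j>0$, $C_j\succeq0$, $w_j\in K$, $M\le N$. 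Absorbing $c_j$ into $C_j$ gives $A_j:=c_jC_j\succeq 0$ and $\int f\,d\mu=\sum_j A_j f(w_j)$ for all $f\in W$. (5) Finally, apply this identity to the constant function $f\equiv 1\in W$: the left side is $\mu(K)=I_d$, so $\sum_j A_j=I_d$, which is the last required conclusion.

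The main obstacle I anticipate is step (3)/(2): justifying rigorously that the matrix-valued integral $\int_K f\,d\mu$ genuinely lies in the cone generated by the atomic maps, and that this cone is closed. The subtlety is that $\mu$ is only a POVM (countably additive in the weak sense, with the $\mu(B)$ merely positive semidefinite, not projections), so one must be careful approximating $\int f\,d\mu$ by finite PSD-weighted point-mass sums uniformly over all $f$ in the finite-dimensional space $W$ — but since $W$ is finite-dimensional, uniform approximation on $W$ reduces to approximation for finitely many basis functions, and the regularity/finiteness conditions in the definition of a POVM are exactly what make the Riemann-sum approximation work. Closedness of the cone then follows from compactness after normalizing the atoms to have trace $1$ (legitimate because $\mathrm{tr}(\mu(K))=d$ bounds the total mass). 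One should also note the hypothesis that $f_1,\dots,f_k$ are real is not needed — one simply replaces the real span by the real span of $\{f_1,\dots,f_k,\overline{f_1},\dots,\overline{f_k},1\}$, or equivalently works over $\mathbb{C}$ throughout, at the cost of at most doubling the dimension bound, hence the bound on $M$. All of this is elementary once the reformulation is in place; no analysis beyond compactness, regularity of the POVM, and finite-dimensional convex geometry (Carathéodory) is required.
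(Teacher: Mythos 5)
Your proposal is correct and follows essentially the same route as the paper: embed the moment functional and the PSD-weighted point evaluations into a finite-dimensional real space, show the moment functional lies in the closed convex cone generated by the atoms via Riemann-type sums against Borel partitions, apply Carath\'eodory to get a finite representation, and read off $\sum_j A_j=I_d$ by evaluating at $f\equiv 1$. The only point to tighten is your closedness argument in step (3): compactness of $\conv(S\cup\{0\})$ alone does not imply $\mathrm{cone}(S)$ is closed (e.g.\ the cone over a disk tangent to the origin), so you should note that after trace-normalization $0\notin\conv(S)$ --- any convex combination of normalized atoms evaluated at $f\equiv 1$ is a trace-one matrix --- whereas the paper proves closedness directly by bounding each positive semi-definite summand by the norm of the total sum.
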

\begin{chapRemark}
This theorem can be viewed as a POVM analogue of what is known as Tchakaloff's theorem (the original statement is set for positive, compactly supported measure which is absolutely continuous with respect to Lebesgue $n$-volume measure).
We should also stress that the proof takes after Putinar's proof of Theorem 1 in \cite{ putinar1997note }.

\end{chapRemark}

\begin{proof}
We first assume  $f_1,...,f_k$ are all real valued functions.  
 Define the following $\mathbb{R}$-linear transformation  $T:M_d(\mathbb{C})\rightarrow \mathbb{R}^{2d^2}$
$$(a_{i,j}+ib_{i,j})_{i,j}\mapsto (a_{1,1},b_{1,1},a_{1,2},...,a_{d,d},b_{d,d}),$$ and set $L=k+1$.
We define the map $v:M_d(\mathbb{C}) \times K\rightarrow \mathbb{R}^{2d^2L}$ to be as such:
$$v(A,x)=(T(A), f_1(x)T(A),f_2(x)T(A),...,f_k(x)T(A)).$$ 

Let $C= \lbrace v(A,x), A\in M^+_d(\mathbb{C}), x\in K \rbrace$.     Our aim is to show that
$\conv(C)$ is a closed set.
 We begin by showing that $C$ is closed.
Let $(v(A_i,x_i))_i$ be a sequence in $C$ which converges to some $w\in\mathbb{R}^{2d^2L}$. It follows $(T(A_i))_i$ is a bounded sequence in $\mathbb{R}^{2d^2}$. 
Since $T$ is an homeomorphism and 
$M^+_d(\mathbb{C})$ is closed we have that $(A_i)_i$ is contained in some compact subset of $M^+_d(\mathbb{C})$. Note that $K$ is compact and $v$ is a continuous map and therefore $(v(A_i,x_i))_i$ is contained in some compact subset of $C$ so we get that $w$ is in $C$, hence $C$ is closed.  

    We return to show $\conv(C)$ is closed.
Note that by Caratheodory's theorem (\cite{davidson2010real} , p. 453) every element in $\conv(C)$ can be written as a sum of at most $2d^2L+1$ elements in $C$ and since $0\in C$ we can assume 
this sum is exactly of $2d^2L+1$ such elements. Thus W.L.O.G.
let $(u_i)_i=(\Sigma_jv(A^{(i)}_j,x^{(i)}_j))_i$  be a sequence in $\conv(C)$ which converges to some $u$.
 Then $(\Sigma_jT(A^{(i)}_j))_i=(T(\Sigma_jA^{(i)}_j))_i$
is a bounded sequence and therefore so is $(\Sigma_jA^{(i)}_j)_i$.
 Since $\Sigma_jA^{(i)}_j$  
is a positive semi-definite matrix, we have that for every fixed $i_0,j_0$
$  \Sigma_jA^{(i_0)}_j - A_{j_0}^{(i_0)} $ is also a positive semi-definite matrix and thus by Theorem 2.2.5 of \cite{mr} we have that:
$$\parallel A_{j_0}^{(i_0)} \parallel \leq \parallel \Sigma_jA^{(i_0)}_j \parallel$$  and so $(A_j^{(i)})_{i,j}$ is bounded. We conclude that for every $1 \leq j_0 \leq 2d^2L+1$, $(v({A^{(i)}_{j_0},x^{(i)}_{j_0}}))_i$
is a bounded sequence in $C$. 
Consequently, there is a convergent subsequence $$(\Sigma_jv(A^{(i)}_j,x^{(i)}_j))_i \rightarrow u= \Sigma_jv({A'_j,x'_j}) \in \conv(C)$$ and $\conv(C)$ is closed.

We now turn to show that the ``moment vector''
$$u=(T( \int{ 1d \mu }),T( \int{  f_1 d \mu}),...,T( \int{ f_k d \mu}))$$ is in $\conv(C)$.
We first recall that if $(g_i)_i$ is a bounded sequence measurable functions which converges uniformly  to a function $g$ then $\int g_i d\mu \rightarrow \int g d\mu$ in the weak operator topology which in our case (since $M_d(\mathbb{C})$ is of finite dimension) is equivalent to the norm topology.   
Now fixing $\varepsilon >0$ there are
 $B_1^{(\varepsilon)} ,...B_l^{(\varepsilon)} \subset K$ Borel sets and points
  $b_1^{(\varepsilon)} ,...,b_l^{(\varepsilon)} $, 
   where $b_i^{(\varepsilon)}\in B_i^{(\varepsilon)}$ 
 such that $\bigcup B_i^{(\varepsilon)}=K$ and for every $f_i$, $i=1,...,k$, one has $$\sup_{x \in K} \mid  f_i(x) - \Sigma _j f_i(b_j^{(\varepsilon)}) \chi _{B_j^{(\varepsilon)}} \mid <\varepsilon.$$ And so we have
$$v_{\varepsilon}= (\Sigma_jT(\mu( B_j^{(\varepsilon)} )),\Sigma _j  f_1(b_{j}^{(\varepsilon)}) T(\mu ( B_j^{(\varepsilon)} ) ),...,\Sigma _j  f_k((b_{j}^{ {(\varepsilon)}})) T(\mu ( B_j^{(\varepsilon)} ) ) =
$$ 
$$
(T(\int  \Sigma_j  \chi _{B_j^{(\varepsilon)}}d\mu)  , T(\int \Sigma _j  f_1(b_{j}^{(\varepsilon)} )\chi _{B_j^{(\varepsilon)}} d\mu) ,...,    T(\int \Sigma_j f_k((b_{j}^{(\varepsilon )}))\chi _{B_j^{(\varepsilon)}}d\mu))
\rightarrow_{\varepsilon \rightarrow 0} =  $$
 
$$(T( \int{ 1d \mu }),T( \int{  f_1 d \mu}),...,T( \int { f_k d \mu})).$$
 Since  $(v_\varepsilon) \subset \conv(C)$  and $\conv(C)$ is closed we conclude $u\in \conv(C)$. Thus by Caratheodory's theorem there are $M= 2d^2L+1$ points $w_1,w_2,...w_M$ in $K$ and $A_1,A_2,...,A_M$ positive semi-definite matrices such that\
$$u=  (T( \int{ 1d \mu }),T( \int{  f_1 d \mu}),...,T( \int { f_kd \mu}))=$$
$$\Sigma_jv(A_j,w_j)=(\Sigma_jA_j,\Sigma_jf_1(w_{j})A_j,...,\Sigma_jf_k(w_{j})A_j).$$

And so for each $i=1,...,k$ we have that
  $$T( \int{ f_id \mu })=\Sigma_j f_i(w_j)T(A_j)$$ and therefore $ \int{ f_id \mu }=\Sigma_j f_i(w_j)A_j$. Expanding linearly we obtain the wanted result for any $f$ in $Span\{f_1,...,f_k\}$.
  
For the general case Let $f_1,...,f_k$ be some functions in $C(K)$.
Then for $j=1,...,k$ define $g_{2j-1}(z)=Ref_j(z)$ and $g_{2j}(z)=Imf_j(z)$.
By applying the previous case to $g_1,...,g_{2k}$ and expanding linearly are able to finish the proof.
\end{proof}
Let us now return to the proof of theorem \ref{AAS}.

\begin{proof}
\textbf{(1) $\implies$ (2)}:
Assume $T=(T_1,...,T_n)$ has a normal $X$-dilation
$N=(N_1,...,N_n)$ acting on a space $K$ containing $H$.
Let $\nu$ be the spectral measure on $\sigma(N)$.
Then we have that for any $n$ variable polynomial $q$
$$\int q d \nu=q(N).$$
We define $\mu(\cdot )=P_H \nu(\cdot)|_H$.
It easy to verify that $\mu$ is a POVM taking values in $B(H)$ 
with a support $\sigma(N)$ and such that $\mu(\sigma(N))=I_H$.
Let  $\mathbb{C}_{\leq m}[z_1,z_2,...,z_n]$ denote the space of all polynomials in $n$ variables over $\mathbb{C}$ of degree at most $m$
and take some basis for $\mathbb{C}_{\leq m}[z_1,z_2,...,z_n]$. Then by Theorem \ref{W1} 
there exists 
 $ M \in \mathbb{N} $, $w_1,w_2,...,w_M$ in $K$,
 $w_j=(w_{1,j},...,w_{n,j})$ and positive operators $A_1,A_2,...,A_M$ in $B(H)$ 
 such that for any $q\in \mathbb{C}_{\leq m}[z_1,z_2,...,z_n]$ we have: 
$$ \int q d \mu =A_1q(w_1)+A_2q(w_2)+...+A_Mq(w_M). $$
$A_1,A_2,...,A_M$ can be thought of as a POVM on a set of $M$ points, and thus 
 by Naimark’s Theorem \cite[P. 40, Theorem 4.6]{paulsen2002completely} this POVM can be dilated to a spectral measure on this set. This simply means that there exists a Hilbert space $L$ containing $H$ and $E_1,...,E_M$ orthogonal projections on $L$ 
such that $\Sigma_jE_j=I$ and for any $j=1,...,M$, $A_j=P_HE_jP_H$.
Moreover $L$ can be chosen to be at most
$M \times d$ dimensional.

Let $i\in \lbrace1,...,n\rbrace$ and set $N_i=\Sigma_jw_{i,j}E_j$.
It is easy to see $N=(N_1,...,N_k)$ are commuting normal operators on $L$.
Note that by \ref{MM} $sp(N)$ is simply $\lbrace w_1,...,w_M\rbrace$ and thus 
 constitute a normal $X-m-$dilation.
 Given that $m$ was arbitrary this completes the proof.

 \textbf{(2) $\implies$ (1)}:
 Assume $T=(T_1,...,T_n)$  has  $X$-$m$-dilation for every $m$ and let $l$ be a positive integer.
 Let $Q=(q_{i,j})_{i,j=1}^l $ be a $l\times l$ matrix whose
entries are in $\mathbb{C}[z_1,...,z_n]$.
Set $m=\max_{i,j=1,...l}\deg(q_{i,j})$
and let $N_1,...,N_n$ a $ X$-$m$-dilation for $T_1,...,T_n$ acting on some finite dimensional space $K$, 
then we have
$$q_{i,j}(T_1,...,T_n)=P_Hq_{i,j}(N_1,...,N_n)|_H$$
for all $i,j=1,...,l$ and thus $$  \Vert ( q_{i,j}(T_1,...,T_n) ) \Vert \leq \Vert ( q_{i,j}(N_1,...,N_n) ) \Vert$$
where the norm is the operator norm on $H\oplus ... \oplus H $ on the left side and on $K\oplus ... \oplus K$ on the right side.
Moreover we have $$\Vert q_{i,j}(N_1,...,N_n) \Vert \leq sup\{ \Vert q_{i,j}(z) \Vert_{M_l} : z \in  X \}$$ combining both inequalities we get  
$$\Vert q_{i,j}(T_1,...,T_n) \Vert \leq sup \{ \Vert q_{i,j}(z) \Vert_{M_l} : z\in  X \}$$ Since $l$ was arbitrary we have that $X$ is a polynomial spectral set for
$T$. Then by Theorem \ref{T3} $T$ has a normal $X$-dilation.

 \end{proof}

%Another observation that is easily deduced by the proceeding is the following:

%%\begin{secProp}
%Let $T = (T_1,...,T_n)$ be commuting operators acting on a Hilbert space $H$
%on dimension $d$, $m$ be some positive integer and let 
%$X\subset \mathbb{C}^n$ be compact.

%We note by $S'$ the space of all polynomials of degree not greater
%than $m$ regarded as a subspace of $C[\partial X$].
%Assume that the map $\phi : S' \rightarrow B(H)$ 
%$$ \phi (q)=q(T)$$
%is $d$ contractive then $B$ has a normal $\partial X$ $n-dilation$

%%\end{secProp}

%\begin{proof}
%%Since $dim H=d$ and $\phi$ is unital we have that by [??] any $d$ contractive map into $B(H)$ extends
%is completely contractive and therefore by [??] extends to a completely positive map $   \psi : C[ \partial X] \rightarrow B(H)$ .
%From here on the proof follows exactly as in the proof of the first part of Theorem [??].

%\end{proof} 
 
\chapter{Directions for further research}
During our work on this note some questions have arisen, some are directly connected to 
the work we have done, some are not of an obvious context.
Here are some of them.

\begin{Qn}
Given two commuting matrices which are both contractions. Can we give an explicit construction of their unitary $m$-dilation? 

\end{Qn}
 We know such a dilation exists due to Ando's dilation Theorem and \ref{AAS}, but even in the case of $2\times 2$ matrices we have not been able to find such a construction.
 Finding it will able us to give a finite dimensional proof to Ando's inequality. Such a proof, to our knowledge, has not been found.

\begin{Qn}
If $T$ has $X$ as a complete spectral set can we give a construction  
to the $m$-normal dilations of $T$? Perhaps only for some specific class of matrices.
As of now we are not able to provide new examples (see \cite[Theorem 4.3]{levy2010dilation})
\end{Qn}
\begin{Qn}
Let $T\in B(H)$ be of norm strictly less then 1, and $\varphi_1,...,\varphi_n\in A(\mathbb{D})$ such that $\vert \varphi_j(z) \vert_{\infty, \mathbb{D}} \leq 1$ for $i=1,...,n$.
We notice that the operators $\varphi_1(T),...,\varphi_n(T)$ are a tuple of commuting contractions. Moreover it is easy to see that the polydisk $\mathbb{D}^n$ is a complete spectral set for $\varphi_1(T),...,\varphi_n(T)$, thus a unitary dilation
exists.
Can we characterize tuples of commuting contractions that arise this way?
 %Can we say anything of further interest of all tuples of contractions that arise that way? 

\end{Qn}
It is known that all tuples of commuting $2\times 2$ contractions can be represented in such way \cite{holbrook1992inequalities}. 
We were also able to show this for the following type of matrices.
\begin{chapDef}
A $d \times d$ matrix $A=(a_{i,j})$ will be called lower triangular Toeplitz matrix
if it is of the form
$$A=\begin{bmatrix}
a_0&0&...&...&...&0\\ a_1& a_0&0&0&...&0
\\ a_2&a_1& a_0 &0&...&0
\\ \vdots & \ddots & \ddots & \ddots & \ddots &\vdots
\\ \vdots & \ddots & \ddots & \ddots & a_0 &0
\\ a_{d-1} & a_{d-2} & ... & a_2 & a_1 & a_0
\end{bmatrix}  $$ for some $a_0,...,a_{d-1}\in \mathbb{C} $
\end{chapDef}

\begin{chapRemark}
Notice that if we set $p_A(z)=a_0+a_1z+...+a_{d-1}z^{d-1}$ then
$p_A(S_d)=A$ where $S_d$ is the nilpotent shift, i.e. $$S_d=\begin{bmatrix}
0&...&...&...&...&0\\ 1& 0&0&0&...&0
\\ 0&1& 0 &0&...&0
\\ \vdots & \ddots & \ddots & \ddots & \ddots &\vdots
\\ \vdots & \ddots & \ddots & 1 & 0 &0
\\ 0 & ... & ... & 0 & 1 & 0
\end{bmatrix}.  $$ It is not difficult to see that $S_d$ is a contraction.
\end{chapRemark}
\begin{chapProp}
Let $A_1,...,A_n\in M_d(\mathbb{C})$ be contractive lower triangular Toeplitz matrices. Then there exists $\varphi_1,...\varphi_n\in A(\mathbb{D})$
such that for any $j=1,...,k$ $\max_{z\in \mathbb{D}} | \varphi_j(z)| \leq 1$
and $\varphi_j( S_d)=A_j$.
\end{chapProp}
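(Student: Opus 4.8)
The plan is to reduce the proposition to the classical Carath\'eodory--Fej\'er interpolation theorem, with the preceding Remark handling the translation between matrices and Taylor coefficients. The first observation to record is that $\varphi(S_d)$ depends only on the first $d$ Taylor coefficients of $\varphi$: since $\sigma(S_d)=\{0\}$ and $S_d^d=0$, the holomorphic functional calculus applied to $\varphi\in A(\mathbb{D})$ with power series $\varphi(z)=\sum_{k\ge 0}c_kz^k$ yields the terminating sum $\varphi(S_d)=\sum_{k=0}^{d-1}c_kS_d^k$, which is precisely the lower triangular Toeplitz matrix carrying $c_0,c_1,\dots,c_{d-1}$ down its diagonals. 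Comparing this with the Remark --- where each lower triangular Toeplitz matrix $A$ satisfies $A=p_A(S_d)$ for the polynomial $p_A(z)=a_0+a_1z+\dots+a_{d-1}z^{d-1}$ formed from the Toeplitz parameters $a_0,\dots,a_{d-1}$ of $A$ --- we see that $\varphi(S_d)=A_j$ holds if and only if the first $d$ Taylor coefficients $c_0,\dots,c_{d-1}$ of $\varphi$ coincide with the Toeplitz parameters of $A_j$; equivalently, if and only if $\varphi-p_{A_j}$ vanishes to order at least $d$ at the origin.

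Next, fix $j$ and let $c_0,\dots,c_{d-1}$ be the Toeplitz parameters of $A_j$, so that the lower triangular Toeplitz matrix built from $c_0,\dots,c_{d-1}$ is $A_j$ itself, which is a contraction by hypothesis. The Carath\'eodory--Fej\'er theorem states that prescribed scalars $c_0,\dots,c_{d-1}$ are the first $d$ Taylor coefficients of some function in the closed unit ball of $H^\infty(\mathbb{D})$ if and only if the associated lower triangular Toeplitz matrix has operator norm at most $1$, and moreover that, when this holds, one such function may be taken to be a finite Blaschke product. Applying the theorem to our data --- the relevant Toeplitz matrix being $A_j$, with $\Vert A_j\Vert\le 1$ --- produces a finite Blaschke product $\varphi_j$, hence a rational function whose only poles lie outside $\overline{\mathbb{D}}$, so that $\varphi_j\in A(\mathbb{D})$ and $\max_{z\in\mathbb{D}}|\varphi_j(z)|\le 1$, and whose first $d$ Taylor coefficients are $c_0,\dots,c_{d-1}$. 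By the first paragraph, $\varphi_j(S_d)=A_j$. Doing this for each $j=1,\dots,n$ completes the proof.

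I expect the Carath\'eodory--Fej\'er theorem to be the only step that is not routine; the reduction in the first paragraph is immediate once one uses $S_d^d=0$, and the hypothesis of the theorem is nothing other than the contractivity of $A_j$. The single point that deserves a remark is the boundary case $\Vert A_j\Vert=1$, the determinate case of the theorem: there the interpolant is uniquely determined but is still a finite Blaschke product --- for instance the constant $c_0$ when $|c_0|=1$, in which case necessarily $A_j=c_0I_d$ --- so the conclusion $\varphi_j\in A(\mathbb{D})$ is not affected.
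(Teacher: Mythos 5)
Your proof is correct and follows essentially the same route as the paper's: both reduce the problem to the classical Carath\'eodory--Schur/Fej\'er coefficient interpolation theorem (contractivity of the lower triangular Toeplitz matrix is exactly the solvability condition) and then use $S_d^d=0$ to see that $\varphi_j(S_d)$ depends only on the first $d$ Taylor coefficients, so $\varphi_j(S_d)=p_{A_j}(S_d)=A_j$. Your extra care in noting that the interpolant can be taken to be a finite Blaschke product, hence lies in $A(\mathbb{D})$ and not merely in the unit ball of $H^\infty$, is a worthwhile refinement of a point the paper absorbs into its citation.
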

\begin{proof}
By the previous remark we have $p_{A_1},...,p_{A_n}$ in $\mathbb{C}[z]$ such that $p_{A_j}(S_d)=A_j$. 
We recall the Caratheodory-Schur interpolation problem \cite[Proposition XXVII.7.2]{gohberg2003basic} which asserts that a lower triangular Toeplitz matrix $A$ is a contraction if and only if there exists a $\varphi \in A(\mathbb{D})$
such that $\widehat{\varphi}(k)=a_k$, $k=0,...d-1$ (where $\widehat{\varphi}(k)$ is the $k$-th Fourier coefficient of $\varphi$) and such that $\sup_{z\in \mathbb{D}} |\varphi (z) | \leq 1$. 
Note that since $S_d^n=0$ for any $n>d-1$ we have that $\varphi_j(S_d)=p_{A_j}(S_d)=A_j$ and we are done.
\end{proof}

\bibliographystyle{plain}
\bibliography{myref}
\end{document}